\documentclass[11pt,a4paper]{article}

\usepackage{amsmath}
\usepackage{amsthm}
\usepackage{amssymb}
\usepackage{amscd}
\usepackage[all]{xy}

\title{Cyclic Covers over Strongly Liftable Schemes\footnote{This paper was partially supported
by the National Natural Science Foundation of China (Grant No.\ 11231003 and 11271070), and the
Shanghai Center for Mathematical Sciences, Fudan University.}}
\author{Qihong Xie}
\date{}
\pagestyle{plain}

% \renewcommand{\baselinestretch}{2}
% update on 2014/06/30 (P)

\theoremstyle{plain}
\newtheorem{prop}{Proposition}[section]
\newtheorem{lem}[prop]{Lemma}
\newtheorem{thm}[prop]{Theorem}
\newtheorem{cor}[prop]{Corollary}
\newtheorem{conj}[prop]{Conjecture}

\theoremstyle{definition}
\newtheorem{defn}[prop]{Definition}
\newtheorem*{ack}{Acknowledgments}
\newtheorem*{nota}{Notation}

\theoremstyle{remark}

\newtheorem{ex}[prop]{Example}

\newcommand{\Q}{\mathbb Q}

\newcommand{\Z}{\mathbb Z}

\newcommand{\F}{\mathbb F}
\newcommand{\A}{\mathbb A}
\newcommand{\PP}{\mathbb P}
\newcommand{\OO}{\mathcal O}
\newcommand{\II}{\mathcal I}

\newcommand{\LL}{\mathcal L}

\newcommand{\BB}{\mathcal B}

\newcommand{\cA}{\mathcal A}

\newcommand{\Supp}{\mathop{\rm Supp}\nolimits}
\newcommand{\Sing}{\mathop{\rm Sing}\nolimits}

\newcommand{\ch}{\mathop{\rm char}\nolimits}

\newcommand{\Spec}{\mathop{\bf Spec}\nolimits}
\newcommand{\spec}{\mathop{\rm Spec}\nolimits}

\newcommand{\proj}{\mathop{\rm Proj}\nolimits}

\newcommand{\tor}{\mathop{\rm Tor}\nolimits}

\newcommand{\divisor}{\mathop{\rm div}\nolimits}

\newcommand{\ra}{\rightarrow}

\newcommand{\inj}{\hookrightarrow}
\newcommand{\wt}{\widetilde}

\setlength{\oddsidemargin}{20pt}
\setlength{\evensidemargin}{20pt}
\setlength{\textwidth}{410pt}
\setlength{\textheight}{650pt}
\setlength{\topmargin}{-20pt}

\begin{document}

\maketitle

\begin{abstract}
A smooth scheme $X$ over a field $k$ of positive characteristic
is said to be strongly liftable over $W_2(k)$, if $X$ and all prime
divisors on $X$ can be lifted simultaneously over $W_2(k)$.
In this paper, we give a criterion for that cyclic covers
over strongly liftable schemes are still strongly liftable.
As a corollary, cyclic covers over projective spaces of dimension
at least three are strongly liftable over $W_2(k)$.
\end{abstract}

\setcounter{section}{0}
\section{Introduction}\label{S1}

Throughout this paper, we always work over {\it an algebraically
closed field $k$ of characteristic $p>0$} unless otherwise stated.
A smooth scheme $X$ is said to be strongly liftable over $W_2(k)$,
if $X$ and all prime divisors on $X$ can be lifted simultaneously
over $W_2(k)$. This notion was first introduced in \cite{xie10} to
study the Kawamata-Viehweg vanishing theorem in positive characteristic,
furthermore, many examples and properties of strongly liftable schemes
were given in \cite{xie10,xie11,xw13}.

Before stating the main theorem, let us fix some notation and assumptions.

Let $X$ be a smooth projective variety, and $\LL$ an invertible sheaf on $X$. Let $N$ be a positive integer
prime to $p$, $0\neq s\in H^0(X,\LL^N)$, and $D=\divisor_0(s)$ the effective divisor of zeros of $s$.
Let $\cA=\bigoplus^{N-1}_{i=0}\LL^{-i}(\big[\displaystyle\frac{iD}{N}\big])$, $Y=\Spec\cA$,
and $\pi:Y\ra X$ the cyclic cover obtained by taking the $N$-th root out of $s$.

Assume that $X$ is strongly liftable over $W_2(k)$, $H^1(X,\LL^N)=0$ and $\Sing(D_{\rm red})=\emptyset$.
By \cite[Theorem 4.1 and Corollary 4.3]{xie11}, $X$ has a lifting $\wt{X}$ over $W_2(k)$,
$\LL$ has a lifting $\wt{\LL}$ on $\wt{X}$, $s$ has a lifting $\wt{s}\in H^0(\wt{X},\wt{\LL}^N)$,
and $Y$ is a smooth projective scheme which is liftable over $W_2(k)$.

In this paper, we shall give a criterion for that cyclic covers over strongly liftable schemes
are still strongly liftable (see \S \ref{S3} and \S \ref{S4} for more details).

\begin{thm}\label{1.1}
With the same notation, assumptions and liftings $\wt{X},\wt{\LL}$ and $\wt{s}$ as above,
assume further that for any prime divisor $E$ on $X$ which is not contained in $\Supp(D)$,
there exists a lifting $\wt{E}\subset\wt{X}$ of $E\subset X$ such that
$\wt{s}|_{\wt{E}}\in H^0(\wt{E},\wt{\LL}^N|_{\wt{E}})$ is a divisible lifting of $s|_E\in H^0(E,\LL^N|_E)$.
Then $Y$ is strongly liftable over $W_2(k)$.
\end{thm}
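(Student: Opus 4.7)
The plan is to start from the lift $\wt\pi\colon\wt Y\to\wt X$ provided by the cited Theorem~4.1 and Corollary~4.3, where $\wt Y=\Spec\wt\cA$ with $\wt\cA=\bigoplus_{i=0}^{N-1}\wt\LL^{-i}(\lfloor i\wt D/N\rfloor)$ and $\wt D=\divisor_0(\wt s)$. To establish strong liftability of $Y$ I must produce a lift $\wt F\subset\wt Y$ over $W_2(k)$ of every prime divisor $F\subset Y$. Since $\pi$ is finite surjective, $\pi(F)$ is always a prime divisor of $X$, so the argument splits into two cases according to whether $\pi(F)\subset\Supp(D)$ or not.

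In the first case $\pi(F)=E_i$ is a component of $D_{\rm red}$. The hypothesis $\Sing(D_{\rm red})=\emptyset$ forces the $E_i$ to be smooth and pairwise disjoint, so the strong liftability of $X$ yields a lift $\wt{E_i}\subset\wt X$. \'Etale-locally near $E_i$ the cyclic cover takes the standard form $z^N=u\cdot v^{a_i}$ with $v$ a local equation for $E_i$, $a_i$ the multiplicity of $D$ along $E_i$, and $u$ a unit; the prime components of $\pi^{-1}(E_i)_{\rm red}$, which number $\gcd(N,a_i)$, admit an explicit description in these coordinates. The identical description applied to the lifts $\wt{E_i}$ and $\wt s$ yields lifted prime components of $\wt\pi^{-1}(\wt{E_i})_{\rm red}$, producing the required $\wt F$.

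The main effort lies in the second case, where $E=\pi(F)$ is a prime divisor of $X$ not contained in $\Supp(D)$. Here $\pi^{-1}(E)\to E$ is itself a cyclic cover, obtained by restriction from $s|_E\in H^0(E,\LL^N|_E)$, and it may be reducible; the number of its components is governed by the $N$-th-root structure of $s|_E$ in $\LL^N|_E$. The divisible lifting hypothesis on $\wt s|_{\wt E}$ is designed precisely so that this $N$-th-root structure extends verbatim over $\wt E$. Consequently the restricted cover $\wt\pi^{-1}(\wt E)\to\wt E$ decomposes in the same fashion as $\pi^{-1}(E)\to E$, and I would take $\wt F$ to be the prime component of $\wt\pi^{-1}(\wt E)$ lifting $F$.

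The hardest step, and the one I expect to be the main obstacle, is making precise the sense in which ``divisible lifting'' forces the decomposition of the restricted cover over $\wt E$ to mirror the decomposition of $\pi^{-1}(E)$ over $E$, and then checking that each lifted component is a Cartier divisor on $\wt Y$ flat over $W_2(k)$ with reduction equal to the corresponding component of $\pi^{-1}(E)$. Once this is verified, together with the ramification case, every prime divisor of $Y$ admits a lift to $\wt Y$ simultaneously with $\wt Y$ itself, proving that $Y$ is strongly liftable over $W_2(k)$.
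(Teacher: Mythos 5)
Your two-case skeleton (image of the prime divisor contained in $\Supp(D)$ or not) is the same as the paper's, and you correctly locate where the difficulty sits; but the step you defer is the actual content of the proof, and the way you frame it contains an inaccuracy that the paper's machinery exists precisely to repair. You describe $\pi^{-1}(E)\to E$ as ``itself a cyclic cover, obtained by restriction from $s|_E$.'' It is not: $\pi^{-1}(E)=\Spec(\cA|_E)$ with $\cA|_E=\bigoplus_{i}\LL^{-i}\big(\big[\frac{iD}{N}\big]\big)\big|_E$, whereas the cyclic cover of $E$ attached to $s|_E$ is $\Spec\BB$ with $\BB=\bigoplus_i(\LL|_E)^{-i}\big(\big[\frac{iD|_E}{N}\big]\big)$. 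Since $\big[\frac{iD}{N}\big]\big|_E\leq\big[\frac{i(D|_E)}{N}\big]$, these algebras differ in general, and one only has a finite surjective comparison morphism $\tau_E:\Spec\BB\to\pi^{-1}(E)$ (the normalization when $E$ is smooth). The paper's Example 3.10 ($X=\PP^2_k$, $s=x^2-yz$, $E=(y=0)$) shows the difference is real: $\pi^{-1}(E)=E_1+E_2$ with $E_1$ and $E_2$ meeting at a point, while $\Spec\BB=F_1\coprod F_2$ is a disjoint union. Consequently the divisibility of $s|_E$ and of its lifting governs the component decomposition of $\Spec\BB$ and $\Spec\wt{\BB}$, not directly that of $\pi^{-1}(E)$ or $\wt{\pi}^{-1}(\wt{E})$, so the assertion that ``the restricted cover over $\wt{E}$ decomposes in the same fashion'' is not literally available.

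The repair, which is the route the paper takes, is: show that $s|_E$ is maximally $\mu$-divisible where $\mu$ is the number of components of $\Spec\BB$; use the divisible-lifting hypothesis to get $\Spec\wt{\BB}=\coprod_{j=1}^\mu\wt{F}_j$ with $\wt{F}_j$ lifting $F_j$; push forward through the lifted comparison map $\tau_{\wt{E}}$ to obtain an irreducible component $\wt{E}_1$ of $\wt{\pi}^{-1}(\wt{E})$ whose reduction mod $p$ is $E_1=E_Y$; and then \emph{separately} prove that $\wt{E}_1$ is flat over $W_2(k)$, which the paper does via the local criterion of flatness, checking $\tor_1^{W_2(k)}(\OO_{\wt{E}_1},k)=0$ from the fact that the local defining equations of $\wt{E}_1$ reduce mod $p$ to those of $E_1$. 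This flatness verification is exactly the point your outline leaves open, and nothing in it supplies the argument. (Your ramified case is fine in substance and matches the paper, which simply takes an irreducible component of $\wt{\pi}^{-1}(\wt{E})$ over the lift $\wt{E}\subset\Supp(\wt{D})$.)
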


As a consequence of Theorem \ref{1.1}, we have the following corollaries.

\begin{cor}\label{1.2}
Let $X$ be a smooth projective variety satisfying the $H^i$-vanishing condition for $i=1,2$.
Then $X$ is strongly liftable over $W_2(k)$. Let $\LL$ be an invertible sheaf on $X$,
$N$ a positive integer prime to $p$, and $D$ an effective divisor on $X$ with $\LL^N=\OO_X(D)$
and $\Sing(D_{\rm red})=\emptyset$. Let $\pi:Y\ra X$ be the cyclic cover obtained by taking
the $N$-th root out of $D$. Then $Y$ is a smooth projective scheme which is strongly liftable
over $W_2(k)$.
\end{cor}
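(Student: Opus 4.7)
The plan is to apply Theorem \ref{1.1}, so I first need to verify its hypotheses: (i) $X$ is strongly liftable over $W_2(k)$; (ii) $H^1(X,\LL^N)=0$; (iii) $\Sing(D_{\rm red})=\emptyset$; and (iv) for every prime divisor $E\subset X$ not contained in $\Supp(D)$ there exists a lifting $\wt{E}\subset\wt{X}$ of $E$ such that $\wt{s}|_{\wt{E}}$ is a divisible lifting of $s|_E$. Item (iii) is given, and (ii) follows at once from the $H^1$-vanishing condition applied to $\LL^N$. For (i), I would cite the corresponding result from the author's earlier papers \cite{xie10,xie11,xw13}: the obstructions to deforming $X$ and its prime divisors simultaneously to $W_2(k)$ are governed by $H^1$ and $H^2$ groups of line bundles on $X$, which vanish under the hypothesis. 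Once (i) is in hand, the liftings $\wt{X}$, $\wt{\LL}$, and $\wt{s}$ already described in the setup of Theorem \ref{1.1} are available via \cite[Theorem 4.1 and Corollary 4.3]{xie11}.

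The core of the argument is (iv). Strong liftability of $X$ supplies \emph{some} lifting $\wt{E}\subset\wt{X}$ of each prime divisor $E\not\subset\Supp(D)$, but $\wt{s}|_{\wt{E}}$ need not automatically be divisible in the sense of \S\ref{S3}--\S\ref{S4}. My strategy is to measure the defect through the freedom in the choice of $\wt{E}$: any two liftings of $E$ differ by a $p$-multiple of a section of $\NN_{E/X}$, and a change of $\wt{E}$ by such a class shifts the restriction $\wt{s}|_{\wt{E}}$ by a correction in $H^0(E,\LL^N|_E)$. To realize any correction that is needed, I would invoke the short exact sequence
\[
0\longrightarrow\LL^N(-E)\longrightarrow\LL^N\longrightarrow\LL^N|_E\longrightarrow 0,
\]
whose associated long exact sequence yields surjectivity of global sections as soon as $H^1(X,\LL^N(-E))=0$. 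This vanishing, together with the second-degree vanishing needed to lift line bundles on $E$, is precisely what the $H^i$-vanishing condition for $i=1,2$ is tailored to supply.

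The main obstacle will be the bookkeeping in step (iv): one must translate the definition of ``divisible lifting'' from \S\ref{S3}--\S\ref{S4} into an explicit cohomology class living in a group that the $H^i$-vanishing kills, and then show that the ambiguity in choosing $\wt{E}$ maps surjectively onto that space of classes. Once this matching is carried out, Theorem \ref{1.1} directly yields strong liftability of $Y$ over $W_2(k)$, while smoothness and projectivity of $Y$ follow from $\Sing(D_{\rm red})=\emptyset$ and the assumption that $N$ is prime to $p$.
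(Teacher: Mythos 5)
Your reduction to Theorem \ref{1.1} is the right frame, and items (i)--(iii) are handled essentially correctly (the paper obtains (i) from the exact sequences (\ref{es4}) and (\ref{es6}) together with Proposition \ref{2.3}, which is the obstruction-theoretic argument you gesture at). The gap is in your treatment of (iv), which is the crux. You propose to repair the failure of divisibility of $\wt{s}|_{\wt{E}}$ by varying the lifting $\wt{E}$ itself: two liftings of $E$ differ by $p$ times a section of $\NN_{E/X}$, and such a change shifts $\wt{s}|_{\wt{E}}$ by $p$ times the product of that section with the normal derivative of $s$ along $E$. The achievable corrections therefore lie in the image of the map $H^0(E,\NN_{E/X})\ra H^0(E,\LL^N|_E)$ given by multiplication by that normal derivative, and there is no reason for this image to contain the particular correction you need: the normal derivative of $s$ can vanish along a divisor in $E$, so this multiplication map is in general far from surjective, and the exact sequence $0\ra\LL^N(-E)\ra\LL^N\ra\LL^N|_E\ra 0$ does not help because the corrections you can realize are constrained to that image, not to all of $H^0(E,\LL^N|_E)$. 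You yourself flag the required surjectivity as ``the main obstacle'' and leave it unproved; as written the argument fails precisely at the step the corollary turns on.

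The paper's route avoids this entirely: one takes an \emph{arbitrary} lifting $\wt{E}$ and never moves it. Given a divisibility $s|_E=t_E^{\otimes\mu}$, the $H^1$-vanishing condition is used twice --- once through $0\ra\LL^{N/\mu}(-E)\ra\LL^{N/\mu}\ra\LL^{N/\mu}|_E\ra 0$ to lift $t_E$ to a global section of $\LL^{N/\mu}$ on $X$, and once through (\ref{es6}) to lift that section over $W_2(k)$ --- producing $\wt{t_E}\in H^0(\wt{E},\wt{\LL}^{N/\mu}|_{\wt{E}})$ lifting $t_E$. Then $\wt{s}|_{\wt{E}}$ and $\wt{t_E}^{\otimes\mu}$ both reduce to $s|_E$, hence differ by a unit $\wt{u}\in\OO^*_{\wt{E}}$ with $r(\wt{u})=1$ (here one needs $H^1(E,\OO_E)=0$, which follows from the $H^i$-vanishing for $i=1,2$ applied to $\OO_X$ and $\OO_X(-E)$); since $p\nmid\mu$, the unit $\wt{u}$ has a $\mu$-th root $\wt{v}$ with $r(\wt{v})=1$, and replacing $\wt{t_E}$ by $\wt{v}\wt{t_E}$ gives $\wt{s}|_{\wt{E}}=\wt{t_E}^{\otimes\mu}$. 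In other words, divisibility is achieved by adjusting the lifted root $\wt{t_E}$, not the lifted divisor $\wt{E}$; to close your gap you should switch to this unit-adjustment argument rather than try to prove the surjectivity your approach requires, which is false in general.
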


\begin{cor}\label{1.3}
Let $X=\PP^n_k$ with $n\geq 3$, and $\LL$ an invertible sheaf on $X$.
Let $N$ be a positive integer prime to $p$, and $D$ an effective divisor
on $X$ with $\LL^N=\OO_X(D)$ and $\Sing(D_{\rm red})=\emptyset$.
Let $\pi:Y\ra X$ be the cyclic cover obtained by taking the $N$-th root out of $D$.
Then $Y$ is a smooth projective scheme which is strongly liftable over $W_2(k)$.
\end{cor}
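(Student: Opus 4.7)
The plan is to deduce Corollary \ref{1.3} directly from Corollary \ref{1.2}. It suffices to verify that $X=\PP^n_k$ with $n\ge 3$ satisfies the $H^i$-vanishing condition for $i=1,2$; once that is established, Corollary \ref{1.2} gives immediately that $X$ is strongly liftable over $W_2(k)$ and that the cyclic cover $Y$ is a smooth projective scheme which is strongly liftable over $W_2(k)$. So the whole content of the corollary reduces to a single cohomological check, together with an appeal to Corollary \ref{1.2}.

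The cohomological verification is carried out as follows. On projective space one has the classical vanishing $H^i(\PP^n_k,\OO(m))=0$ for every $m\in\Z$ whenever $0<i<n$; since $n\ge 3$, both $i=1$ and $i=2$ lie strictly between $0$ and $n$, so $H^1$ and $H^2$ of every line bundle on $\PP^n_k$ vanish. If the $H^i$-vanishing condition is instead phrased in terms of twisted differentials $\Omega^j_{\PP^n_k}(m)$, the same conclusion follows from Bott's formula, which again produces the required vanishing in degrees $1$ and $2$ precisely because $n\ge 3$. In either reading, the hypothesis of Corollary \ref{1.2} holds on $X=\PP^n_k$.

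The main obstacle is essentially bookkeeping: matching the precise formulation of the ``$H^i$-vanishing condition'' appearing in Corollary \ref{1.2} to the cohomology groups that actually vanish on $\PP^n_k$, rather than any new geometric input. The hypothesis $n\ge 3$ is clearly sharp for this approach, since for $n\le 2$ the degree $i=2$ fails to lie strictly below $n$, the $H^2$-vanishing is lost, and the reduction to Corollary \ref{1.2} no longer applies. With the vanishing in hand, the conclusion is a one-line invocation of Corollary \ref{1.2} applied to the data $(X,\LL,N,D)$ specified in the statement.
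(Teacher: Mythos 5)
Your proposal is correct and matches the paper's own proof: the paper's Definition \ref{4.3} phrases the $H^i$-vanishing condition exactly as $H^i(X,\LL)=0$ for every invertible sheaf $\LL$ (so your first reading, via $H^i(\PP^n_k,\OO(m))=0$ for $0<i<n$, is the right one and the Bott-formula hedge is unnecessary), and the paper likewise concludes by a one-line appeal to Corollary \ref{1.2}.
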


In \S \ref{S2}, we will recall some definitions and preliminary results of strongly liftable schemes.
In \S \ref{S3}, we will give some preliminary results of cyclic covers. The main theorem will be proved
in \S \ref{S4}. For the necessary notions and results on the cyclic cover trick, we refer the reader to \cite{ev}.

\begin{nota}
We use $[B]=\sum [b_i] B_i$ (resp.\ $\ulcorner B\urcorner=\sum \ulcorner b_i\urcorner B_i$,
$\langle B\rangle=\sum \langle b_i\rangle B_i$) to denote the round-down (resp.\ round-up,
fractional part) of a $\Q$-divisor $B=\sum b_iB_i$, where for a real number $b$,
$[b]:=\max\{ n\in\Z \,|\,n\leq b \}$, $\ulcorner b\urcorner:=-[-b]$ and $\langle b\rangle:=b-[b]$.
We use $\Sing(D_{\rm red})$ (resp.\ $\Supp(D)$) to denote the singular locus of the
reduced part (resp.\ the support) of a divisor $D$.
\end{nota}

\begin{ack}
I would like to express my gratitude to Professor Luc Illusie, Professor H\'el\`ene Esnault and
the referees for many useful comments, which make this paper more readable.
\end{ack}

\section{Preliminaries on strongly liftable schemes}\label{S2}

\begin{defn}\label{2.1}
Let $W_2(k)$ be the ring of Witt vectors of length two of $k$.
Then $W_2(k)$ is flat over $\Z/p^2\Z$, and $W_2(k)\otimes_{\Z/p^2\Z}\F_p=k$.
The following definition \cite[Definition 8.11]{ev} generalizes the definition
\cite[1.6]{di} of liftings of $k$-schemes over $W_2(k)$.

Let $X$ be a noetherian scheme over $k$, and $D=\sum D_i$ a reduced Cartier
divisor on $X$. A lifting of $(X,D)$ over $W_2(k)$ consists of a scheme
$\wt{X}$ and closed subschemes $\wt{D}_i\subset\wt{X}$, all defined and
flat over $W_2(k)$ such that $X=\wt{X}\times_{\spec W_2(k)}\spec k$ and
$D_i=\wt{D}_i\times_{\spec W_2(k)}\spec k$. We write
$\wt{D}=\sum \wt{D}_i$ and say that $(\wt{X},\wt{D})$ is a lifting
of $(X,D)$ over $W_2(k)$, if no confusion is likely.

Let $\LL$ be an invertible sheaf on $X$. A lifting of $(X,\LL)$ consists
of a lifting $\wt{X}$ of $X$ over $W_2(k)$ and an invertible sheaf $\wt{\LL}$
on $\wt{X}$ such that $\wt{\LL}|_X=\LL$. For simplicity, we say that
$\wt{\LL}$ is a lifting of $\LL$ on $\wt{X}$, if no confusion is likely.
\end{defn}

Let $\wt{X}$ be a lifting of $X$ over $W_2(k)$. Then $\OO_{\wt{X}}$ is flat
over $W_2(k)$, hence flat over $\Z/p^2\Z$. Note that there is an exact
sequence of $\Z/p^2\Z$-modules:
\[
0\ra p\cdot\Z/p^2\Z\ra \Z/p^2\Z\stackrel{r}{\ra} \Z/p\Z\ra 0,
\]
and a $\Z/p^2\Z$-module isomorphism $p:\Z/p\Z\ra p\cdot\Z/p^2\Z$.
Tensoring the above by $\OO_{\wt{X}}$, we obtain an exact sequence of
$\OO_{\wt{X}}$-modules:
\begin{eqnarray}
0\ra p\cdot\OO_{\wt{X}}\ra \OO_{\wt{X}}\stackrel{r}{\ra}
\OO_X\ra 0, \label{es1}
\end{eqnarray}
and an $\OO_{\wt{X}}$-module isomorphism
\begin{eqnarray}
p:\OO_X\ra p\cdot\OO_{\wt{X}}, \label{es2}
\end{eqnarray}
where $r$ is the reduction modulo $p$ satisfying $p(x)=p\wt{x}$,
$r(\wt{x})=x$ for $x\in\OO_X$, $\wt{x}\in\OO_{\wt{X}}$.

\begin{defn}\label{2.2}
Let $X$ be a smooth scheme over $k$. $X$ is said to be strongly liftable
over $W_2(k)$, if there is a lifting $\wt{X}$ of $X$ over $W_2(k)$, such that
for any prime divisor $D$ on $X$, $(X,D)$ has a lifting $(\wt{X},\wt{D})$
over $W_2(k)$ as in Definition \ref{2.1}, where $\wt{X}$ is fixed for all
liftings $\wt{D}$.
\end{defn}

Let $X$ be a smooth scheme over $k$, $\wt{X}$ a lifting of $X$ over $W_2(k)$,
$D$ a prime divisor on $X$ and $\LL_D=\OO_X(D)$ the associated invertible
sheaf on $X$. Then there is an exact sequence of abelian sheaves:
\begin{eqnarray}
0\ra \OO_X\stackrel{q}{\ra} \OO^*_{\wt{X}}\stackrel{r}{\ra}\OO^*_X\ra 1,
\label{es3}
\end{eqnarray}
where $q(x)=p(x)+1$ for $x\in\OO_X$, $p:\OO_X\ra p\cdot\OO_{\wt{X}}$ is
the isomorphism (\ref{es2}) and $r$ is the reduction modulo $p$. The exact
sequence (\ref{es3}) gives rise to an exact sequence of cohomology groups:
\begin{eqnarray}
H^1(\wt{X},\OO^*_{\wt{X}})\stackrel{r}{\ra} H^1(X,\OO^*_X)\ra
H^2(X,\OO_X). \label{es4}
\end{eqnarray}
If $r:H^1(\wt{X},\OO^*_{\wt{X}})\ra H^1(X,\OO^*_X)$ is surjective,
then $\LL_D$ has a lifting $\wt{\LL}_D$. We combine (\ref{es1}) and (\ref{es2})
to obtain an exact sequence of $\OO_{\wt{X}}$-modules:
\begin{eqnarray}
0\ra \OO_{X}\stackrel{p}{\ra} \OO_{\wt{X}}\stackrel{r}{\ra}
\OO_X\ra 0. \label{es5}
\end{eqnarray}
Tensoring (\ref{es5}) by $\wt{\LL}_D$, we have an exact sequence of
$\OO_{\wt{X}}$-modules:
\begin{eqnarray*}
0\ra \LL_D\stackrel{p}{\ra} \wt{\LL}_D\stackrel{r}{\ra} \LL_D\ra 0,
\end{eqnarray*}
which gives rise to an exact sequence of cohomology groups:
\begin{eqnarray}
H^0(\wt{X},\wt{\LL}_D)\stackrel{r}{\ra} H^0(X,\LL_D)\ra H^1(X,\LL_D).
\label{es6}
\end{eqnarray}

There is a criterion for strong liftability over $W_2(k)$ \cite[Proposition 2.5]{xie11}.

\begin{prop}\label{2.3}
Let $X$ be a smooth scheme over $k$, and $\wt{X}$ a lifting of $X$ over
$W_2(k)$. If for any prime divisor $D$ on $X$, there is a lifting $\wt{\LL}_D$
of $\LL_D=\OO_X(D)$ on $\wt{X}$ such that the natural map $r:H^0(\wt{X},\wt{\LL}_D)\ra
H^0(X,\LL_D)$ is surjective, then $X$ is strongly liftable over $W_2(k)$.
\end{prop}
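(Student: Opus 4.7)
The plan is, for each prime divisor $D$ on $X$, to lift the canonical section cutting out $D$ to $\wt{X}$ and take its zero scheme as the lift $\wt{D}$. Let $s_D\in H^0(X,\LL_D)$ be the canonical section with $\divisor_0(s_D)=D$. By hypothesis there is a lift $\wt{\LL}_D$ of $\LL_D$ on $\wt{X}$ such that $r:H^0(\wt{X},\wt{\LL}_D)\ra H^0(X,\LL_D)$ is surjective, so I can choose a preimage $\wt{s}_D\in H^0(\wt{X},\wt{\LL}_D)$ of $s_D$ and set $\wt{D}:=\divisor_0(\wt{s}_D)$, i.e.\ the closed subscheme of $\wt{X}$ cut out by $\wt{s}_D$. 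By construction the reduction of $\wt{D}$ modulo $p$ is $D$; since the lifting $\wt{X}$ is independent of $D$, the only remaining point of Definition \ref{2.2} to verify is the flatness of $\wt{D}$ over $W_2(k)$.

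To obtain flatness I would first check that $\wt{s}_D$ is a non-zerodivisor at every point $x\in\wt{X}$. If $\wt{a}\wt{s}_D=0$ in $\OO_{\wt{X},x}$, reducing modulo $p$ via \eqref{es1} gives $as_D=0$ in the regular local ring $\OO_{X,x}$, so $a=0$ and therefore $\wt{a}=p\wt{b}$ for some $\wt{b}$. Because $\OO_{\wt{X}}$ acts on $p\cdot\OO_{\wt{X}}$ through its reduction $\OO_X$, the isomorphism \eqref{es2} identifies $p\wt{b}\wt{s}_D$ with $bs_D\in\OO_X$; its vanishing forces $b=0$, whence $\wt{a}=0$.

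Once $\wt{s}_D$ is a non-zerodivisor, there is a Koszul-type resolution
\[
0\ra \wt{\LL}_D^{-1}\stackrel{\wt{s}_D}{\ra}\OO_{\wt{X}}\ra \OO_{\wt{D}}\ra 0,
\]
whose two leftmost terms are flat over $W_2(k)$. Applying $-\otimes^L_{W_2(k)}k$, the Tor long exact sequence identifies $\tor_1^{W_2(k)}(\OO_{\wt{D}},k)$ with the kernel of $\LL_D^{-1}\stackrel{s_D}{\ra}\OO_X$, which is zero since $s_D$ is a non-zerodivisor on $X$. By the local flatness criterion $\OO_{\wt{D}}$ is flat over $W_2(k)$, so $(\wt{X},\wt{D})$ is a lifting of $(X,D)$ in the sense of Definition \ref{2.1}. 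As the same $\wt{X}$ serves every prime divisor $D$, it follows that $X$ is strongly liftable over $W_2(k)$. The only real obstacle is the flatness verification of $\wt{D}$; everything else is formal once the hypothesis produces the section $\wt{s}_D$.
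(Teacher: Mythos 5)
Your proof is correct, and it is essentially the approach the paper relies on: the paper cites this criterion from \cite{xie11} without reproducing the proof, but the same technique appears verbatim elsewhere in the text --- lifting the defining section and setting $\wt{D}=\divisor_0(\wt{s})$ in the proof of Theorem \ref{3.1}, and verifying flatness over the Artin local ring $W_2(k)$ by showing $\tor_1^{W_2(k)}(-,k)=0$ in the proof of Theorem \ref{4.1}. No gaps.
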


\section{Preliminaries on cyclic covers}\label{S3}

For convenience of citation, we recall the following result \cite[Theorem 4.1 and Corollary 4.3]{xie11}
with a sketch of the proof.

\begin{thm}\label{3.1}
Let $X$ be a smooth projective variety, and $\LL$ an invertible sheaf on $X$. Let $N$ be a positive
integer prime to $p$, $0\neq s\in H^0(X,\LL^N)$, and $D=\divisor_0(s)$ the divisor of zeros of $s$.
Let $\cA=\bigoplus^{N-1}_{i=0}\LL^{-i}(\big[\displaystyle\frac{iD}{N}\big])$, $Y=\Spec\cA$,
and $\pi:Y\ra X$ the cyclic cover obtained by taking the $N$-th root out of $s$.
Assume that $X$ is strongly liftable over $W_2(k)$, $H^1(X,\LL^N)=0$ and $\Sing(D_{\rm red})=\emptyset$.
Then $X$ has a lifting $\wt{X}$ over $W_2(k)$, $\LL$ has a lifting $\wt{\LL}$ on $\wt{X}$, $s$ has a lifting
$\wt{s}\in H^0(\wt{X},\wt{\LL}^N)$, and $Y$ is a smooth projective scheme which is liftable over $W_2(k)$.
\end{thm}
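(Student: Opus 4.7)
The plan is to produce the lifting data in three stages — first $\wt X$, then $\wt\LL$, then $\wt s$ — and finally assemble the cyclic cover over $\wt X$ using these liftings. The first stage is immediate: since $X$ is strongly liftable over $W_2(k)$, fix any such lifting $\wt X$.

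For the second stage, I would first exploit strong liftability to lift $\LL^N=\OO_X(D)$. Writing $D=\sum m_iD_i$ with prime components $D_i\subset X$, strong liftability gives liftings $\wt D_i\subset\wt X$, and $\MM:=\OO_{\wt X}(\sum m_i\wt D_i)$ is a lifting of $\LL^N$. To lift $\LL$ itself I would analyze the obstruction via (\ref{es4}): the obstruction $o(\LL)\in H^2(X,\OO_X)$ is linear in the Picard class, so $N\cdot o(\LL)=o(\LL^N)=0$. Because $H^2(X,\OO_X)$ is a $k$-vector space and $\gcd(N,p)=1$, multiplication by $N$ is invertible, hence $o(\LL)=0$ and some lifting $\wt\LL$ exists. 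The set of liftings of $\LL$ is a torsor under $H^1(X,\OO_X)$ — another $k$-vector space on which $N$ acts invertibly — so I can modify $\wt\LL$ by a lifting of $\OO_X$ so as to arrange $\wt\LL^N\cong\MM$.

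For the third stage, apply (\ref{es6}) to $\wt\LL^N$: since $H^1(X,\LL^N)=0$, the reduction map $H^0(\wt X,\wt\LL^N)\xrightarrow{r}H^0(X,\LL^N)$ is surjective, and a preimage $\wt s$ of $s$ is the desired divisible lifting. By construction, $\wt s$ is non-zero-divisor (its reduction $s$ is non-zero-divisor on the integral scheme $X$ and $\wt X$ is flat over $W_2(k)$), so $\wt D:=\divisor_0(\wt s)$ is a well-defined effective Cartier divisor lifting $D$, and I can arrange $\wt D=\sum m_i\wt D_i$ by choosing $\wt s$ compatibly.

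For the final stage, form the $\OO_{\wt X}$-algebra $\wt\cA=\bigoplus_{i=0}^{N-1}\wt\LL^{-i}\bigl(\bigl[\tfrac{i\wt D}{N}\bigr]\bigr)$ with multiplication induced by $\wt s$ exactly as in the characteristic $p$ construction, and set $\wt Y:=\Spec\wt\cA$. By construction $\wt Y\times_{\spec W_2(k)}\spec k=Y$. The main obstacle I expect is verifying that $\wt Y$ is flat over $W_2(k)$ and that $Y$ is smooth; the second follows from the standard cyclic cover fact (see \cite[Lemma 3.16]{ev}) that $\Sing(D_{\rm red})=\emptyset$ together with $\gcd(N,p)=1$ forces the $N$-th root cover to be smooth, and flatness of $\wt Y/W_2(k)$ is then local on $\wt X$ and can be checked étale-locally by exhibiting $\wt\cA$ as a free $\OO_{\wt X}$-module of rank $N$ compatible with the reduction, since the sheaves $\wt\LL^{-i}([i\wt D/N])$ are each invertible.
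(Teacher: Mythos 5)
Your proposal is correct and follows essentially the same route as the paper: fix a lifting $\wt{X}$ from strong liftability, lift $\LL$, use $H^1(X,\LL^N)=0$ together with the exact sequence (\ref{es6}) to lift $s$ to $\wt{s}$, set $\wt{D}=\divisor_0(\wt{s})$, and take $\wt{Y}=\Spec\wt{\cA}$ as the lifting of $Y$. The only (harmless) divergence is in the second stage: the paper obtains $\wt{\LL}$ directly from strong liftability (citing \cite[Theorem 4.1]{xie11}), whereas you first lift $\OO_X(D)$ componentwise and then kill the obstruction $o(\LL)\in H^2(X,\OO_X)$ using $p\nmid N$ -- a valid alternative, and your extra normalization $\wt{\LL}^N\cong\MM$ and the arrangement $\wt{D}=\sum m_i\wt{D}_i$ are not needed, since taking $\wt{D}=\divisor_0(\wt{s})$ already produces a divisor lifting $D$ with $\OO_{\wt X}(\wt D)\cong\wt\LL^N$.
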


\begin{proof}
Since $X$ is strongly liftable over $W_2(k)$, there is a lifting $\wt{X}$ of $X$ and a lifting $\wt{\LL}$
of $\LL$ on $\wt{X}$. Since $H^1(X,\LL^N)=0$, the exact sequence (\ref{es6}) gives rise to a surjection
$H^0(\wt{X},\wt{\LL}^N)\stackrel{r}{\ra} H^0(X,\LL^N)$, hence $s$ has a lifting $\wt{s}\in H^0(\wt{X},\wt{\LL}^N)$.
Let $\wt{D}=\divisor_0(\wt{s})$. Then $\wt{D}$ is a lifting of $D$.
Let $\wt{\cA}=\bigoplus^{N-1}_{i=0}\wt{\LL}^{-i}(\big[\displaystyle\frac{i\wt{D}}{N}\big])$
and $\wt{Y}=\Spec\wt{\cA}$. Then $\wt{Y}$ is a lifting of $Y$.
Thus $Y$ is a smooth projective scheme which is liftable over $W_2(k)$.
\end{proof}

The above result says that cyclic covers over strongly liftable schemes are liftable over $W_2(k)$
under certain conditions, however, in general, they are not strongly liftable over $W_2(k)$
(see \cite[Remark 4.6]{xie11} for more details). In order to prove the second part of Theorem \ref{1.1},
some elementary results on cyclic covers over integral schemes are needed.
First of all, we recall an easy lemma \cite[Lemma 3.15(a)]{ev}.

\begin{lem}\label{3.2}
Let $X$ be an integral scheme, and $\LL$ an invertible sheaf on $X$. Let $N$ be a positive integer
prime to $p$, $0\neq s\in H^0(X,\LL^N)$, and $D=\divisor_0(s)$ the divisor of zeros of $s$.
Let $\cA=\bigoplus^{N-1}_{i=0}\LL^{-i}(\big[\displaystyle\frac{iD}{N}\big])$, $Y=\Spec\cA$,
and $\pi:Y\ra X$ the cyclic cover obtained by taking the $N$-th root out of $s$.
Then $Y$ is reducible if and only if there is an integer $\mu>1$ dividing $N$ and a section
$t\in H^0(X,\LL^{N/\mu})$ such that $s=t^{\otimes\mu}$.
\end{lem}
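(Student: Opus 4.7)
The plan is to reduce the reducibility of $Y$ to a polynomial factorization question over the function field $K=K(X)$. Since $\pi:Y\ra X$ is finite and $X$ is integral, every irreducible component of $Y$ dominates $X$, so the irreducible components of $Y$ are in bijection with the minimal primes of the generic-fibre algebra $\cA_\eta$. After choosing a trivialization of $\LL$ at the generic point $\eta$, the section $s$ corresponds to some nonzero $f\in K$, the boundary divisors $[iD/N]$ all vanish at $\eta$, and the multiplication rules of $\cA$ collapse to give $\cA_\eta\cong K[T]/(T^N-f)$. Hence $Y$ is reducible if and only if $T^N-f$ factors nontrivially in $K[T]$.

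The ``if'' direction is straightforward. If $s=t^{\otimes\mu}$ with $\mu>1$ dividing $N$ and $t\in H^0(X,\LL^{N/\mu})$, then $f=g^\mu$, where $g\in K$ is the image of $t$ under a compatible trivialization of $\LL^{N/\mu}$, and
\[
T^N-f=T^N-g^\mu=\prod_{i=0}^{\mu-1}\bigl(T^{N/\mu}-\zeta^i g\bigr),
\]
with $\zeta\in k$ a primitive $\mu$-th root of unity (which exists since $p\nmid\mu$ and $k$ is algebraically closed). This is a nontrivial factorization of $T^N-f$ over $K$, so $Y$ is reducible.

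For the ``only if'' direction I would invoke the Vahlen--Capelli theorem: when $\gcd(N,\ch K)=1$, $T^N-f$ is irreducible over $K$ if and only if $f$ is not a $q$-th power in $K$ for every prime $q\mid N$. (The supplementary clause concerning $-4f$ being a fourth power when $4\mid N$ is absorbed into the $q=2$ case, because $\sqrt{-1}\in k\subset K$ and so $-4f$ being a fourth power already forces $f$ to be a square.) Thus reducibility of $Y$ yields a prime $q\mid N$ and a nonzero $g\in K$ with $f=g^q$. Viewed via the trivialization, $g$ is a rational section of $\LL^{N/q}$ satisfying $q\cdot\divisor(g)=\divisor_0(s)=D$, so $\divisor(g)=D/q$ is an effective integral Weil divisor (the coefficients of $D$ are automatically divisible by $q$).

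The main obstacle is upgrading this rational section $g$ to an honest global section $t\in H^0(X,\LL^{N/q})$. This is where the hypothesis that $X$ is integral (and, in the intended applications, normal or smooth) is crucial: a rational section of an invertible sheaf on such a scheme whose associated divisor is effective is automatically regular. Granting this, $g$ defines the desired $t\in H^0(X,\LL^{N/q})$, and taking $\mu=q$ gives $s=t^{\otimes\mu}$, completing the proof.
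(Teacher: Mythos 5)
Your proposal is correct and follows essentially the same route as the paper: the paper's proof also localizes to a dense open subset (an affine $\spec B\subset X\setminus D_{\rm red}$ trivializing $\LL$) and reduces the question to whether $x^N-u$ factors, which is exactly your Vahlen--Capelli step at the generic point. You are in fact more careful than the paper at the one delicate point -- promoting the $\mu$-th root from the function field back to a global section of $\LL^{N/\mu}$, which needs $X$ normal (or one notes that the root is integral over $\OO_X$); the paper silently asserts $v\in B$ at the corresponding step.
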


\begin{proof}
We can consider the problem over a dense open subset $\spec B\subset X\setminus D_{\rm red}$.
Since $H^0(X,\LL^N)\cong B$, we may assume that $s\in H^0(X,\LL^N)$ corresponds to an element $u\in B$.
Since $\spec B[x]/(x^N-u)$ is a dense open subset of $Y$, $Y$ is reducible if and only if $x^N-u$ is
reducible in $B[x]$, which is equivalent to the existence of some $v\in B$ with $u=v^\mu$.
\end{proof}

\begin{defn}\label{3.3}
Let $X$ be a scheme, $\LL$ an invertible sheaf on $X$, $N$ a positive integer, and $0\neq s\in H^0(X,\LL^N)$.
The section $s$ is said to be $\mu$-divisible, if $\mu>0$ divides $N$ and there exists a section
$t\in H^0(X,\LL^{N/\mu})$ such that $s=t^{\otimes\mu}$. The section $s$ is said to be maximally $\mu$-divisible,
if $s$ is $\mu$-divisible, and if $s$ is also $\nu$-divisible then $\nu\leq\mu$.
\end{defn}

\begin{lem}\label{3.4}
With notation and assumptions as in Lemma \ref{3.2}, then $Y$ has exactly $\mu$
irreducible components if and only if the section $s$ is maximally $\mu$-divisible.
\end{lem}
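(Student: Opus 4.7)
The plan is to pass to the generic fiber of $\pi:Y\to X$. Let $K=K(X)$ be the function field of $X$. After trivializing $\LL$ at the generic point, the section $s$ corresponds to a nonzero element $u\in K$, and the generic fiber of $\pi$ becomes $\spec K[x]/(x^N-u)$. Because $\cA$ is locally free over $\OO_X$, every irreducible component of $Y$ dominates $X$ and thus corresponds to a minimal prime of this generic fiber; because $\gcd(N,p)=1$ and $u\ne 0$, the polynomial $x^N-u$ is separable, so $K[x]/(x^N-u)$ is a finite \'etale $K$-algebra whose minimal primes are in bijection with the irreducible factors of $x^N-u$ in $K[x]$. It therefore suffices to show that the number of irreducible factors of $x^N-u$ over $K$ equals the maximal $\mu$ for which $s$ is $\mu$-divisible.

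Next I would establish the following dictionary: for each divisor $r$ of $N$, the section $s$ is $r$-divisible if and only if $u\in(K^*)^r$. The forward direction is tautological. For the converse, given $u=v^r$ with $v\in K^*$, view $v$ as a rational section of $\LL^{N/r}$; its divisor on $X$ equals $\tfrac{1}{r}\divisor_0(s)$, which is effective. Hence $v$ is in fact a regular section $t\in H^0(X,\LL^{N/r})$, and the equality $t^r=s$ follows by comparison at the generic point of the integral scheme $X$.

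Under this dictionary, ``$s$ is maximally $\mu$-divisible'' is equivalent to ``$\mu$ is the largest divisor of $N$ with $u\in(K^*)^\mu$''. Writing $u=v^\mu$, $M=N/\mu$, and picking a primitive $\mu$-th root of unity $\zeta_\mu\in k$, I obtain the factorization
\[
x^N-u \;=\; \prod_{j=0}^{\mu-1}\bigl(x^M-\zeta_\mu^j\,v\bigr),
\]
a product of $\mu$ pairwise distinct polynomials. Each factor $x^M-\zeta_\mu^j v$ is irreducible over $K$ by Capelli's theorem: since $K$ contains all $M$-th roots of unity, $x^M-c$ is irreducible over $K$ provided $c$ is not an $\ell$-th power in $K^*$ for any prime $\ell\mid M$ (the exceptional case $4\mid M$ with $c=-4b^4$ is absorbed, since $k\subset K$ contains a primitive $4$-th root of unity whenever $4\mid N$, making $-4b^4$ a square). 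An equation $\zeta_\mu^j v=w^\ell$ in $K^*$ would yield $u=v^\mu=w^{\ell\mu}$ with $\ell\mu\mid N$, contradicting the maximality of $\mu$. Hence $x^N-u$ has exactly $\mu$ irreducible factors over $K$. The converse direction of the biconditional is then automatic: the maximal divisibility $\mu'$ of $s$ exists (bounded by $N$), the forward direction gives that $Y$ has exactly $\mu'$ components, so $Y$ having $\mu$ components forces $\mu=\mu'$.

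The main obstacle is the irreducibility step in the forward direction, which requires invoking Capelli's theorem (or proving it in the needed form via Kummer theory applied to the cyclic extension $K(u^{1/N})/K$); all other steps are routine bookkeeping once the problem has been transferred to the generic fiber.
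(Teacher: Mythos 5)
Your proof is correct, and it reduces the statement to the same algebraic kernel as the paper's proof---counting the irreducible factors of $x^N-u$ over a localization of $X$---but you execute the two halves differently and, at the crucial point, more completely. For ``$s$ is $\mu$-divisible $\Rightarrow$ $Y$ has at least $\mu$ components,'' the paper argues geometrically: it factors $\pi$ as $Y\to Y_1\to X$, where $Y_1\to X$ is the unramified $\mu$-cover obtained by extracting the $\mu$-th root of $1\in H^0(X,\OO_X)$, so $Y_1$ (hence $Y$) already has $\mu$ components; you instead read this off from the explicit factorization $x^N-u=\prod_{j}(x^{N/\mu}-\zeta_\mu^j v)$ at the generic point. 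For the reverse inequality the paper says only ``by the proof of Lemma \ref{3.2}, $s$ is $\nu$-divisible''; but the proof of Lemma \ref{3.2} addresses reducibility ($\geq 2$ components versus $1$), and upgrading it to an exact count of components is precisely the Capelli-type analysis you carry out (irreducibility of each factor $x^{N/\mu}-\zeta_\mu^j v$ because a relation $\zeta_\mu^j v=w^\ell$ would force $u\in(K^*)^{\ell\mu}$ with $\ell\mu\mid N$, together with the observation that the exceptional case $-4b^4$ is a square since $k$ is algebraically closed and $p\nmid N$). So your write-up supplies the detail the paper delegates to \cite[Lemma 3.15]{ev}. One small caveat: the converse of your dictionary---$u\in(K^*)^r$ with $r\mid N$ implies $s$ is $r$-divisible as a \emph{regular} section---uses that a rational function with effective divisor is regular, i.e.\ that $X$ is normal. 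Lemma \ref{3.2} is stated for an integral scheme, but normality is implicit in the whole setup (the sheaves $\OO_X(\big[\frac{iD}{N}\big])$, and the paper's own descent of $v$ from the fraction field to $B$, require it), so this is a point worth flagging rather than a genuine gap.
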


\begin{proof}
First of all, we prove that if $s$ is $\mu$-divisible then $Y$ has at least $\mu$ irreducible components. Indeed,
assume that $s$ is $\mu$-divisible, then there is a section $t\in H^0(X,\LL^{N/\mu})$ such that $s=t^{\otimes\mu}$,
$D=\mu D_1$, where $D_1=\divisor_0(t)$. It follows from a direct calculation that $\pi:Y\ra X$ factorizes into the
composition of two cyclic covers: $Y\stackrel{\pi_2}{\ra} Y_1\stackrel{\pi_1}{\ra} X$, where $\pi_1:Y_1\ra X$ is
the cyclic cover obtained by taking the $\mu$-th root out of $1\in H^0(X,(\LL^{N/\mu}(-D_1))^\mu)=H^0(X,\OO_X)$, and
$\pi_2:Y\ra Y_1$ is the cyclic cover obtained by taking the $N/\mu$-th root out of $\pi_1^*t\in H^0(Y_1,\pi_1^*\LL^{N/\mu})$.
Since $\pi_1$ is unramified, $Y_1$ has at least $\mu$ irreducible components, hence so does $Y$.

If the section $s$ is maximally $\mu$-divisible, then $Y$ has at least $\mu$ irreducible components by the above argument.
If $Y$ has exactly $\nu$ irreducible components with $\nu>\mu$, then by the proof of Lemma \ref{3.2}, $s$ is also $\nu$-divisible
with $\nu>\mu$, which is absurd. Conversely, if $Y$ has exactly $\mu$ irreducible components, then $s$ is $\mu$-divisible by
the proof of Lemma \ref{3.2}, and furthermore, $s$ is maximally $\mu$-divisible by the above argument.
\end{proof}

\begin{defn}\label{3.5}
With notation and assumptions as in Definition \ref{3.3}, assume further that $X$ has a lifting $\wt{X}$ over $W_2(k)$,
$\LL$ has a lifting $\wt{\LL}$ on $\wt{X}$. A section $\wt{s}\in H^0(\wt{X},\wt{\LL}^N)$ is called a divisible
lifting of $s\in H^0(X,\LL^N)$, if the following conditions hold:
\begin{itemize}
\item[(i)] $\wt{s}$ is a lifting of $s$, i.e.\ $r(\wt{s})=s$; and
\item[(ii)] if there is an integer $\mu>0$ dividing $N$ and a section $t\in H^0(X,\LL^{N/\mu})$ such that $s=t^{\otimes\mu}$,
then there exists a section $\wt{t}\in H^0(\wt{X},\wt{\LL}^{N/\mu})$ lifting $t$ such that $\wt{s}=\wt{t}^{\otimes\mu}$.
\end{itemize}
It is easy to see that if $s$ is maximally $\mu$-divisible and $\wt{s}$ is a divisible lifting of $s$,
then $\wt{s}$ is also maximally $\mu$-divisible.
\end{defn}

\begin{lem}\label{3.6}
With notation and assumptions as in Lemma \ref{3.2}, assume further that $X$ has a lifting $\wt{X}$ over $W_2(k)$,
$\LL$ has a lifting $\wt{\LL}$ on $\wt{X}$, and $s$ has a lifting $\wt{s}\in H^0(\wt{X},\wt{\LL}^N)$. Let
$\wt{D}=\divisor_0(\wt{s})$, $\wt{\cA}=\bigoplus^{N-1}_{i=0}\wt{\LL}^{-i}(\big[\displaystyle\frac{i\wt{D}}{N}\big])$
and $\wt{Y}=\Spec\wt{\cA}$. If $s$ is maximally $\mu$-divisible and $\wt{s}$ is a divisible lifting of $s$, then
$\wt{Y}$ has exactly $\mu$ irreducible components.
\end{lem}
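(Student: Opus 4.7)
The plan is to reduce Lemma \ref{3.6} to Lemma \ref{3.4} by observing that $\wt{Y}$, being a flat lifting of $Y$ over $W_2(k)$, has the same underlying topological space as $Y$ and hence the same number of irreducible components.

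First I would verify that $\wt{Y}=\Spec\wt{\cA}$ is indeed a flat lifting of $Y$, exactly as in the proof of Theorem \ref{3.1}. Each summand $\wt{\LL}^{-i}([i\wt{D}/N])$ is an invertible sheaf on $\wt{X}$, hence flat over $W_2(k)$, and since $\wt{D}=\divisor_0(\wt{s})$ is a Cartier lifting of $D=\divisor_0(s)$, the reduction modulo $p$ of $\wt{\cA}$ is $\cA$. Thus $\wt{Y}$ is flat over $W_2(k)$ and $\wt{Y}\times_{\spec W_2(k)}\spec k=Y$.

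Then I would invoke the standard fact that the closed immersion $Y\hookrightarrow\wt{Y}$ is cut out by the ideal $p\OO_{\wt{Y}}$, which is nilpotent (since $p^2=0$ in $W_2(k)$, so $(p\OO_{\wt{Y}})^2=0$). Such an immersion is a homeomorphism of underlying topological spaces, so the irreducible components of $\wt{Y}$ are in natural bijection with those of $Y$. By Lemma \ref{3.4}, the maximal $\mu$-divisibility of $s$ forces $Y$ to have exactly $\mu$ irreducible components, and hence so does $\wt{Y}$.

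One observation worth highlighting is that the hypothesis that $\wt{s}$ is a \emph{divisible} lifting of $s$ (rather than merely a lifting) is not strictly required for this topological counting argument; the number of components of $\wt{Y}$ is already determined by the maximal $\mu$-divisibility of $s$ alone. The divisibility of $\wt{s}$ is needed later, in the proof of Theorem \ref{1.1}, where the explicit factorization $\wt{Y}\ra\wt{Y}_1\ra\wt{X}$ arising from $\wt{s}=\wt{t}^{\otimes\mu}$ is used to lift prime divisors on the individual irreducible components of $Y$ to subschemes of $\wt{Y}$. Alternatively, one could give a proof parallel to Lemma \ref{3.4} at the $W_2(k)$-level, using that the degree-$\mu$ factor $\wt{Y}_1\ra\wt{X}$ obtained from $\wt{s}=\wt{t}^{\otimes\mu}$ is étale (as $\mu$ is prime to $p$ and $W_2(k)$ contains primitive $\mu$-th roots of unity by Hensel's lemma) and hence splits into $\mu$ irreducible pieces over the irreducible $\wt{X}$; the topological argument above then supplies the matching upper bound. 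The main (very mild) technical point to check is that the floor $[i\wt{D}/N]$ truly reduces to $[iD/N]$ modulo $p$, which follows because $\wt{D}$ and $D$ are Cartier divisors given by the zero loci of the compatible sections $\wt{s}$ and $s$.
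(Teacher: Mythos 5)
Your proof is correct, and it is genuinely simpler than the paper's. The paper proves the two inequalities separately: for the lower bound it invokes the hypothesis that $\wt{s}$ is a divisible lifting, writes $\wt{s}=\wt{t}^{\otimes\mu}$, and repeats the factorization argument of Lemma \ref{3.4} at the $W_2(k)$-level (an unramified degree-$\mu$ cover of the irreducible $\wt{X}$ splitting into $\mu$ pieces); for the upper bound it uses exactly your observation, namely that $\wt{Y}\times_{\spec W_2(k)}\spec k=Y$ forces any $\nu>\mu$ components of $\wt{Y}$ to produce $\nu$ components of $Y$, contradicting Lemma \ref{3.4}. You instead note that $Y\hookrightarrow\wt{Y}$ is a closed immersion defined by the square-zero ideal $p\OO_{\wt{Y}}$, hence a homeomorphism, so the count of irreducible components (a purely topological invariant) transfers directly from Lemma \ref{3.4} in both directions at once. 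Your side remark is also accurate: the divisible-lifting hypothesis is not needed for the bare count in this lemma; where it genuinely earns its keep is in the proof of Theorem \ref{4.1}, to guarantee that the individual components $\wt{F}_j$ (and ultimately $\wt{E}_1$) are \emph{flat} over $W_2(k)$ --- i.e.\ honest liftings rather than merely topological matches --- which is established there by the $\tor_1^{W_2(k)}$ computation. What the paper's longer route buys is precisely the explicit lifted factorization $\wt{Y}\ra\wt{Y}_1\ra\wt{X}$ that is reused in that later flatness argument; what your route buys is a two-line proof of the lemma as stated. The only point to make explicit, which you do flag, is that $[i\wt{D}/N]$ restricts to $[iD/N]$, so that $\wt{\cA}\otimes_{W_2(k)}k=\cA$ and $\wt{Y}$ really is a lifting of $Y$; this is the same verification the paper performs in Theorem \ref{3.1}.
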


\begin{proof}
By factorizing $\wt{\pi}:\wt{Y}\ra \wt{X}$ into the composition of two cyclic covers, we can prove that $\wt{Y}$ has
at least $\mu$ irreducible components, whose proof is almost identical to the argument given in the proof of Lemma \ref{3.4}
by changing the usual data into the lifted ones. Assume that $\wt{Y}$ has exactly $\nu$ irreducible components with $\nu>\mu$.
Since $\wt{Y}\times_{\spec W_2(k)}\spec k=Y$ and irreducible components of $\wt{Y}$ have distinct underlying topological spaces,
we have that $Y$ has at least $\nu$ irreducible components with $\nu>\mu$, which contradicts Lemma \ref{3.4}. Thus $\wt{Y}$ has
exactly $\mu$ irreducible components.
\end{proof}

\begin{lem}\label{3.7}
With notation and assumptions as in Lemma \ref{3.2}, let $E$ be a prime divisor on $X$ which is not contained in $\Supp(D)$,
$\BB=\bigoplus^{N-1}_{i=0}(\LL|_E)^{-i}(\big[\displaystyle\frac{iD|_E}{N}\big])$, and
$\cA|_E=\bigoplus^{N-1}_{i=0}\LL^{-i}(\big[\displaystyle\frac{iD}{N}\big])|_E$ the restriction of $\cA$ to $E$.
Then there is a natural finite surjective morphism $\tau_E:\Spec\BB\ra \Spec\cA|_E$.
\end{lem}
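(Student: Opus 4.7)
The plan is to construct $\tau_E$ from an inclusion of coherent $\OO_E$-algebras $\cA|_E\inj\BB$ and then verify finiteness and surjectivity of the induced morphism of spectra.

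The key calculation is the pointwise inequality
\[
[iD/N]|_E\leq [iD|_E/N]
\]
of effective divisors on $E$, for each $0\leq i<N$. Writing $D=\sum_j a_jD_j$ with the $D_j$ prime on $X$ and decomposing each well-defined restriction $D_j|_E=\sum_k m_{jk}F_k$ into prime divisors $F_k$ of $E$ with $m_{jk}\in\Z_{\geq 0}$, the left-hand side equals $\sum_k\bigl(\sum_j [ia_j/N]\,m_{jk}\bigr)F_k$ while the right-hand side equals $\sum_k\bigl[i(\sum_j a_jm_{jk})/N\bigr]F_k$; the elementary inequality $\sum_j[x_j]\,m_j\leq\bigl[\sum_j x_jm_j\bigr]$ for $m_j\in\Z_{\geq 0}$ settles it. Tensoring $\OO_E([iD/N]|_E)\inj\OO_E([iD|_E/N])$ with $(\LL|_E)^{-i}$ and summing over $i$ yields an inclusion of coherent $\OO_E$-modules $\cA|_E\inj\BB$.

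I would then check that this inclusion is an $\OO_E$-algebra homomorphism. Both the multiplication on $\cA$ (and hence on $\cA|_E$) and the multiplication on $\BB$ are built, piece by piece, from the inclusions $[iD/N]+[jD/N]\leq[(i+j)D/N]$ for $i+j<N$ together with a term involving $s$, respectively $s|_E$, when $i+j\geq N$; viewed as $\OO_E$-bilinear maps $\cA|_E\otimes_{\OO_E}\cA|_E\to\BB$, the two possible orderings (multiply then include, versus include then multiply) are given by the same explicit formula on the dense open $U=E\setminus\Supp(D|_E)$, where every round-down vanishes. Since $\BB$ is locally free and hence torsion-free over $\OO_E$, agreement on the dense open $U$ forces global agreement, so $\cA|_E\inj\BB$ is indeed an algebra map.

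Passing to spectra gives $\tau_E:\Spec\BB\ra\Spec\cA|_E$. Since $\cA|_E$ contains $\OO_E$ as its $i=0$ summand and $\BB$ is coherent over $\OO_E$, $\BB$ is a finitely generated $\cA|_E$-module, so $\tau_E$ is finite. For surjectivity, observe that over $U$ the inclusion $\cA|_E\to\BB$ is an isomorphism, so $\tau_E$ restricts to an isomorphism over $\pi_0^{-1}(U)$, where $\pi_0:\Spec\cA|_E\ra E$ is the structure morphism. Because $\cA|_E$ is locally free of rank $N$ over $\OO_E$, $\pi_0$ is flat and finite, so every irreducible component of $\Spec\cA|_E$ dominates the integral scheme $E$; each generic point lies over the generic point of $E$, which sits in $U$, and hence $\pi_0^{-1}(U)$ is dense. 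Since $\tau_E$ is finite, hence closed, its image must be all of $\Spec\cA|_E$. The main subtle point of the proof is the algebra-compatibility check; the rest is formal manipulation of divisors and sheaves.
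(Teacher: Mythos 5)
Your proposal is correct and follows essentially the same route as the paper: the key inequality between round-downs yields an injective homomorphism of $\OO_E$-algebras $\cA|_E\hookrightarrow\BB$, finiteness follows because $\BB$ is already finite over $\OO_E$, and surjectivity follows from density of the image (the paper cites Matsumura's lemma on injections of rings, you argue via the generic isomorphism over $E\setminus\Supp(D|_E)$) together with closedness of finite morphisms. Your explicit verification that the inclusion is multiplicative is a detail the paper leaves implicit, but it does not change the approach.
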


\begin{proof}
It is easy to see that $\big[\displaystyle\frac{im}{N}\big]\geq m\big[\displaystyle\frac{i}{N}\big]$ holds
for any $i\geq 0$ and $m\geq 1$. Thus there are injective homomorphisms $\LL^{-i}(\big[\displaystyle\frac{iD}{N}\big])|_E
\ra(\LL|_E)^{-i}(\big[\displaystyle\frac{iD|_E}{N}\big])$ for all $0\leq i\leq N-1$, which induce
a natural injective homomorphism of $\OO_E$-algebras: $\bigoplus^{N-1}_{i=0}\LL^{-i}(\big[{\displaystyle\frac{iD}{N}}
\big])|_E\ra \bigoplus^{N-1}_{i=0}(\LL|_E)^{-i}(\big[{\displaystyle\frac{iD|_E}{N}}\big])$. By \cite[(6.D) Lemma 2]{ma80},
there is a natural dominant morphism $\tau_E:\Spec\BB\ra \Spec\cA|_E$, which fits into a commutative diagram:
\[
\xymatrix{
\Spec\BB \ar[rr]^{\tau_E} \ar[rd]_\sigma & & \Spec\cA|_E \ar[ld]^{\pi|_{\pi^{-1}(E)}} \\
 & E &
}
\]
where $\pi|_{\pi^{-1}(E)}:\Spec\cA|_E=E\times_{X}Y=\pi^{-1}(E)\ra E$ is the restriction of $\pi$ to $\pi^{-1}(E)$ over $E$,
and $\sigma:\Spec\BB\ra E$ is the cyclic cover obtained by taking the $N$-th root out of $s|_E$. Since $\BB$ is a finite
$\OO_E$-module, hence a finite $\cA|_E$-module, $\tau_E$ is finite. Since a finite morphism is closed
\cite[Exercise II.3.5]{ha77}, $\tau_E$ is surjective.
\end{proof}

\begin{cor}\label{3.8}
With notation and assumptions as in Lemma \ref{3.7}, assume further that $E$ is smooth. Then $\tau_E:\Spec\BB\ra \Spec\cA|_E$
is the normalization morphism of $\Spec\cA|_E$.
\end{cor}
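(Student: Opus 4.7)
The plan is to identify $\tau_E$ with the normalization of $\Spec\cA|_E$ by verifying the three characterizing properties: $\tau_E$ is finite and surjective, $\tau_E$ is an isomorphism over a dense open subset of the target, and the source $\Spec\BB$ is normal.

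First I would set $U:=E\setminus\Supp(D|_E)$, which is open and dense in $E$. Since $E$ is not contained in $\Supp(D)$, no prime component $D_k$ of $D$ contains $E$; hence $D_k\cap E$ is either empty or a proper effective divisor on $E$, in either case lying inside $\Supp(D|_E)$. Therefore every $D_k$ is disjoint from $U$, which forces $[iD/N]|_U=0=[iD|_E/N]|_U$ for all $0\le i\le N-1$, and the injection $\cA|_E\hookrightarrow\BB$ from the proof of Lemma \ref{3.7} becomes an equality over $U$. Consequently $\tau_E$ is an isomorphism over the dense open $\pi^{-1}(U)\subset\Spec\cA|_E$, and combined with Lemma \ref{3.7} this says that $\tau_E$ is finite, surjective, and birational. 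Moreover $\cA|_E$ is locally free over the domain $\OO_E$, hence embeds in its generic fibre, which is an \'etale algebra of degree $N$ over $K(E)$ (the section $s|_E$ is invertible at the generic point of $E$ and $N$ is prime to $p$), so in particular reduced; thus $\Spec\cA|_E$ is reduced.

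For the source, $\BB=\bigoplus_{i=0}^{N-1}(\LL|_E)^{-i}([iD|_E/N])$ is exactly the standard Esnault--Viehweg cyclic-cover algebra on the smooth scheme $E$ obtained by extracting an $N$-th root of $s|_E$, and since $(N,p)=1$ the resulting total space is normal; see \cite{ev}. Assembling the pieces, $\tau_E$ is a finite birational morphism from the normal scheme $\Spec\BB$ onto the reduced scheme $\Spec\cA|_E$, so $\BB$ is a finite $\cA|_E$-algebra with the same total ring of fractions and is integrally closed in that ring; therefore $\BB$ coincides with the integral closure of $\cA|_E$, and $\tau_E$ is the normalization morphism of $\Spec\cA|_E$. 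The only step requiring serious external input is the normality of $\Spec\BB$; the rest is a direct comparison of the two cyclic-cover algebras away from the branch locus.
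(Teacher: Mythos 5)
Your proof is correct, and it reaches the conclusion by a somewhat different mechanism than the paper. The paper's own argument is a sandwich: it introduces the smaller algebra $\cA'|_E=\bigoplus^{N-1}_{i=0}(\LL|_E)^{-i}$, notes the chain of injections $\cA'|_E\inj\cA|_E\inj\BB$, and quotes \cite[3.5 and 3.10]{ev} for the statement that $\Spec\BB$ is the normalization of $\Spec\cA'|_E$ (using smoothness of $E$); since $\cA|_E$ is squeezed between an algebra and its integral closure, $\Spec\BB$ is also the normalization of $\Spec\cA|_E$. You instead verify the characterizing properties of the normalization directly: finiteness and surjectivity (from Lemma \ref{3.7}), the fact that $\tau_E$ is an isomorphism over the dense open set $U=E\setminus\Supp(D|_E)$ where all the round-down correction terms vanish, reducedness of the target via the \'etale generic fibre $K(E)[x]/(x^N-u)$ with $(N,p)=1$, and normality of $\Spec\BB$ from \cite{ev}. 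Both routes ultimately rest on the same external input (normality of the Esnault--Viehweg cyclic cover over a smooth base); yours is longer but makes explicit the birationality and the identification of total rings of fractions, which the paper's sandwich argument leaves implicit. One small point you could make fully explicit: the density of $\pi^{-1}(U)$ in $\Spec\cA|_E$ and of $\tau_E^{-1}(\pi^{-1}(U))$ in $\Spec\BB$ both follow from the local freeness (hence torsion-freeness) of $\cA|_E$ and $\BB$ over $\OO_E$, which guarantees that every associated point of either scheme lies over the generic point of $E$; you use this for reducedness but it is also what makes ``isomorphism over a dense open subset'' upgrade to ``isomorphism on total rings of fractions.''
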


\begin{proof}
Denote $\cA'|_E=\bigoplus^{N-1}_{i=0}(\LL|_E)^{-i}$. Then there are natural injective homomorphisms of $\OO_E$-algebras:
$\cA'|_E\inj \cA|_E\inj \BB$, which induce morphisms: $\Spec\BB\ra \Spec\cA|_E\ra \Spec\cA'|_E$. Since $E$ is smooth and
$\Spec\BB\ra E$ is the cyclic cover obtained by taking the $N$-th root out of $s|_E$, by \cite[3.5 and 3.10]{ev}, $\Spec\BB$
is the normalization of $\Spec\cA'|_E$, hence of $\Spec\cA|_E$.
\end{proof}

\begin{lem}\label{3.9}
With notation and assumptions as in Lemma \ref{3.6}, let $E$ be a prime divisor on $X$
which is not contained in $\Supp(D)$, $\wt{E}\subset\wt{X}$ a lifting of $E\subset X$,
$\wt{\BB}=\bigoplus^{N-1}_{i=0}(\wt{\LL}|_{\wt{E}})^{-i}(\big[\displaystyle\frac{i\wt{D}|_{\wt{E}}}{N}\big])$, and
$\wt{\cA}|_{\wt{E}}=\bigoplus^{N-1}_{i=0}\wt{\LL}^{-i}(\big[\displaystyle\frac{i\wt{D}}{N}\big])|_{\wt{E}}$ the restriction of
$\wt{\cA}$ to $\wt{E}$. Then there is a natural finite surjective morphism $\tau_{\wt{E}}:\Spec\wt{\BB}\ra \Spec\wt{\cA}|_{\wt{E}}$,
which is a lifting of $\tau_E:\Spec\BB\ra \Spec\cA|_E$ constructed as in Lemma \ref{3.7}.
\end{lem}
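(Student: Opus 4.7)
The plan is to transcribe the proof of Lemma~\ref{3.7} verbatim into the Witt vector setting, and then verify a naturality statement which implies that the resulting morphism lifts $\tau_E$.

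First I would observe that since $\wt{E}$ is a flat $W_2(k)$-lifting of $E$ and $E\not\subset\Supp(D)$, the subscheme $\wt{E}$ is not contained in $\Supp(\wt{D})$; hence $\wt{D}|_{\wt{E}}$ is a well-defined effective Cartier divisor on $\wt{E}$ lifting $D|_E$. Applying the inequality $[im/N]\geq m[i/N]$ coefficient-by-coefficient to the components of $\wt{D}$, exactly as in Lemma~\ref{3.7}, yields an inequality of $\Q$-divisors
\[
\left[\frac{i\wt{D}|_{\wt{E}}}{N}\right]\geq \left[\frac{i\wt{D}}{N}\right]\Big|_{\wt{E}}
\]
on $\wt{E}$, for each $0\leq i\leq N-1$. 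This produces injective $\OO_{\wt{E}}$-linear maps $\wt{\LL}^{-i}([i\wt{D}/N])|_{\wt{E}}\inj(\wt{\LL}|_{\wt{E}})^{-i}([i\wt{D}|_{\wt{E}}/N])$, and summing them gives an injective $\OO_{\wt{E}}$-algebra homomorphism $\wt{\cA}|_{\wt{E}}\inj\wt{\BB}$.

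Next I would apply $\Spec$ together with \cite[(6.D) Lemma~2]{ma80}, as in the proof of Lemma~\ref{3.7}, to obtain a dominant $\wt{E}$-morphism $\tau_{\wt{E}}:\Spec\wt{\BB}\ra\Spec\wt{\cA}|_{\wt{E}}$. Since $\wt{\BB}$ is finite over $\OO_{\wt{E}}$ it is a fortiori finite over $\wt{\cA}|_{\wt{E}}$, so $\tau_{\wt{E}}$ is finite, hence closed by \cite[Exercise~II.3.5]{ha77}; combined with dominance this yields surjectivity.

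Finally, I would check the lifting property, which is really the only step meriting care: the construction is manifestly compatible with base change $-\otimes_{W_2(k)}k$, because by flatness $\wt{D}|_{\wt{E}}$ reduces to $D|_E$, the round-down sheaves reduce to their characteristic-$p$ counterparts, and the injection is built from the same combinatorial inequality used in Lemma~\ref{3.7}. Consequently the algebra injection $\wt{\cA}|_{\wt{E}}\inj\wt{\BB}$ reduces modulo $p$ to $\cA|_E\inj\BB$, so applying $\Spec$ gives $\tau_{\wt{E}}\times_{\spec W_2(k)}\spec k=\tau_E$. This naturality verification is the only nontrivial part of the argument; the rest is a direct translation of Lemma~\ref{3.7} to the lifted setting.
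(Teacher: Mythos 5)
Your proposal is correct and follows exactly the route the paper intends: the paper's own proof of Lemma \ref{3.9} consists solely of the remark that it is similar to that of Lemma \ref{3.7}, and your argument is precisely that translation to the $W_2(k)$-setting. Your added verification that the algebra injection $\wt{\cA}|_{\wt{E}}\inj\wt{\BB}$ reduces modulo $p$ to $\cA|_E\inj\BB$ (so that $\tau_{\wt{E}}$ is genuinely a lifting of $\tau_E$) is a welcome elaboration of the one point the paper leaves implicit.
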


\begin{proof}
It is similar to that of Lemma \ref{3.7}.
\end{proof}

We give a simple example to show the difference between $\Spec\BB$ and $\Spec\cA|_E$ defined as in Lemma \ref{3.7}.

\begin{ex}\label{3.10}
Let $X=\PP^2_k=\proj k[x,y,z]$, $\LL=\OO_X(1)$, $N=2$, $s=x^2-yz\in H^0(X,\LL^N)$ with $D=(x^2-yz=0)$, $\ch(k)=p\geq 3$,
and $E=(y=0)$. Consider the cyclic cover $\pi:Y\ra X$ obtained by taking the square root out of $s$. Look at $\pi$ over the
affine piece $\A^2_k=\spec k[u,v]$, where $u=x/z$ and $v=y/z$, then $Y$ is defined by the equation $t^2=u^2-v$, and $E$
is defined by the equation $v=0$. It is easy to see that $\pi^{-1}(E)$ consists of two irreducible components, say $E_1$
and $E_2$, which are defined by the equations $t\pm u=0$ respectively. Thus $E_1$ and $E_2$ are smooth, intersect transversally
and map isomorphically onto $E$.

Since $s|_E$, the restriction of $s$ to $E$, is defined by $x^2=0$ on $E=\proj k[x,z]$, we have $D|_E=2Q$, where $Q$ is the
point $[0:1]$ on $E$. Therefore $\OO_E$-algebras $\BB=\bigoplus^{N-1}_{i=0}(\LL|_E)^{-i}(\big[\displaystyle\frac{iD|_E}{N}\big])
=\OO_E\oplus\OO_E$, $\cA|_E=\bigoplus^{N-1}_{i=0}\LL^{-i}(\big[\displaystyle\frac{iD}{N}\big])|_E=\OO_E\oplus\OO_E(-1)$.
By assumption, $\Spec\cA|_E=E\times_X Y=\pi^{-1}(E)=E_1+E_2$, whereas by Corollary \ref{3.8}, $\Spec\BB=\Spec(\OO_E\oplus\OO_E)
=F_1\coprod F_2$ is a disjoint union of $F_1$ and $F_2$ such that $\tau_E:F_1\coprod F_2\ra E_1+E_2$ is the normalization morphism.
\end{ex}

\section{Proof of the main theorem}\label{S4}

In this section, we shall prove the main theorem as follows.

\begin{thm}\label{4.1}
With notation and assumptions as in Theorem \ref{3.1}, fix such liftings $\wt{X},\wt{\LL}$ and $\wt{s}$ as in Theorem \ref{3.1}.
Assume further that for any prime divisor $E$ on $X$ which is not contained in $\Supp(D)$, there exists a lifting
$\wt{E}\subset\wt{X}$ of $E\subset X$ such that $\wt{s}|_{\wt{E}}\in H^0(\wt{E},\wt{\LL}^N|_{\wt{E}})$ is a divisible
lifting of $s|_E\in H^0(E,\LL^N|_E)$. Then $Y$ is strongly liftable over $W_2(k)$.
\end{thm}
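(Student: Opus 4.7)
The plan is to verify the criterion of Proposition \ref{2.3} by constructing, for every prime divisor $C$ on $Y$, a lifting $\wt{C}\subset\wt{Y}$, flat over $W_2(k)$ with $\wt{C}\otimes k = C$. Then $\wt{\LL}_C := \OO_{\wt{Y}}(\wt{C})$ lifts $\LL_C$ with the canonical defining section of $\wt{C}$ lifting that of $C$. Every prime divisor $C$ on $Y$ satisfies one of: (i) $\pi(C) = E$ is a prime divisor of $X$ not contained in $\Supp(D)$, or (ii) $\pi(C) = E$ is a (smooth) component of $D_{\rm red}$.

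In case (i), I invoke the divisibility hypothesis. Let $\mu$ denote the maximal divisibility of $s|_E$. By Lemma \ref{3.4} applied to $\Spec\BB \to E$, combined with Lemma \ref{3.7} and Corollary \ref{3.8}, the preimage $\pi^{-1}(E) = \Spec\cA|_E$ has exactly $\mu$ irreducible components, in bijection with those of $\Spec\BB$, and $C$ is one of them. Since $\wt{s}|_{\wt{E}}$ is a divisible lifting of $s|_E$, it is also maximally $\mu$-divisible by Definition \ref{3.5}, so Lemmas \ref{3.6} and \ref{3.9} show that $\wt{\pi}^{-1}(\wt{E}) = \Spec\wt{\cA}|_{\wt{E}}$ has exactly $\mu$ irreducible components lifting those of $\pi^{-1}(E)$. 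Concretely, writing $\wt{s}|_{\wt{E}} = \wt{t}^{\otimes\mu}$ and using that the $\mu$-th roots of unity exist in $W_2(k)$ (as $\mu$ is prime to $p$), the factorization $T^N - \wt{s}|_{\wt{E}} = \prod_{\zeta^{\mu} = 1}(T^{N/\mu} - \zeta\wt{t})$ exhibits each of these $\mu$ components as a flat effective Cartier divisor on $\wt{Y}$; the one reducing to $C$ is the required $\wt{C}$.

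In case (ii), the divisibility hypothesis is silent. The smoothness of $E$ (a consequence of $\Sing(D_{\rm red})=\emptyset$) together with strong liftability of $X$ furnishes a lift $\wt{E}\subset\wt{X}$. Writing $d = \mathrm{mult}_E D$ and $g = \gcd(N, d)$, the normalized cyclic cover is locally controlled near $\wt{E}$ by an equation of shape $\wt{\tau}^{N/g} = \wt{x}^{d/g}\cdot\wt{u}$, with $\wt{x}$ a local defining equation of $\wt{E}$ and $\wt{u}$ a lifted unit. The prime components of $\wt{\pi}^{-1}(\wt{E})$ extracted from this local form glue, via the globally defined algebra $\wt{\cA}$, into flat lifts $\wt{C}_j\subset\wt{Y}$ of the prime components of $\pi^{-1}(E)$.

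The chief obstacle of the proof is precisely this second case: lacking the divisibility hypothesis, one must argue directly from the ramified local structure of the normalized cover, verifying both flatness over $W_2(k)$ and global consistency of the candidate lifts. Case (i), by contrast, is handled cleanly by the machinery developed in Section \ref{S3}, particularly Lemma \ref{3.6} together with the explicit factorization of $T^N - \wt{s}|_{\wt{E}}$.
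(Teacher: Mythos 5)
Your overall strategy coincides with the paper's: push a prime divisor $C\subset Y$ down to $E=\pi_*(C)$, split according to whether $E\subset\Supp(D)$, and in the unramified case use the divisibility hypothesis together with Lemmas \ref{3.4}, \ref{3.6}, \ref{3.7} and \ref{3.9} to transport a component $\wt{F}_1$ of $\Spec\wt{\BB}$ to a candidate component $\wt{E}_1=\tau_{\wt{E}}(\wt{F}_1)$ of $\wt{\pi}^{-1}(\wt{E})$. One small slip: you invoke Corollary \ref{3.8}, which assumes $E$ smooth, whereas a prime divisor $E\not\subset\Supp(D)$ need not be smooth; the paper only uses that $\tau_E$ is finite and surjective (an isomorphism over a dense open set), which is enough to match $F_1$ with $E_1$.

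The genuine gap is at the end of your case (i): you assert that each factor $T^{N/\mu}-\zeta\wt{t}$ ``exhibits'' a component of $\wt{\pi}^{-1}(\wt{E})$ as a \emph{flat} effective Cartier divisor, but flatness of the candidate $\wt{E}_1$ over $W_2(k)$ is precisely what has to be proved, and it is the technical core of the paper's argument. The paper proves it by showing that, because $\wt{s}|_{\wt{E}}$ is a divisible lifting, the ideal sheaf $\wt{\II}$ of $\wt{E}_1$ in $\wt{Z}=\wt{\pi}^{-1}(\wt{E})$ satisfies $\wt{\II}\otimes_{W_2(k)}k=\II$, and then applying the local criterion of flatness, i.e.\ deducing $\tor_1^{W_2(k)}(\OO_{\wt{E}_1},k)=0$ from the long exact sequence of $0\to\wt{\II}\to\OO_{\wt{Z}}\to\OO_{\wt{E}_1}\to 0$; note also that your factorization lives on $\Spec\wt{\BB}$ (equivalently, away from $\Supp(\wt{D})$), while $\wt{E}_1$ is a closure inside $\wt{\pi}^{-1}(\wt{E})$, so ``effective Cartier on $\wt{Y}$'' is not literally what the factorization hands you. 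Separately, your assessment that the ramified case is the chief obstacle is inverted relative to the paper: since $\Sing(D_{\rm red})=\emptyset$ and $\wt{D}=\divisor_0(\wt{s})$ lifts $D$, the paper disposes of $E\subset\Supp(D)$ in one line by taking the corresponding component $\wt{E}\subset\Supp(\wt{D})$ and an irreducible component of $\wt{\pi}^{-1}(\wt{E})$ reducing to $E_Y$; all of the new hypotheses and the machinery of \S\ref{S3} are aimed at the unramified case.
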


Before proving Theorem \ref{4.1}, we use Example \ref{3.10} to illustrate the meaning of the further assumption made in Theorem \ref{4.1}.

\begin{ex}\label{4.2}
With notation and assumptions as in Example \ref{3.10},
take liftings of $X,\LL,s,D$ and $E$ as follows: $\wt{X}=\PP^2_{W_2(k)}=\proj W_2(k)[x,y,z]$, $\wt{\LL}=\OO_{\wt{X}}(1)$,
$\wt{s}=x^2-yz\in H^0(\wt{X},\wt{\LL}^N)$, $\wt{D}=(x^2-yz=0)$, and $\wt{E}=(y-pz=0)$.
Denote $\wt{Y}=\Spec\bigoplus^{N-1}_{i=0}\wt{\LL}^{-i}(\big[\displaystyle\frac{i\wt{D}}{N}\big])$ and $\wt{\pi}:\wt{Y}\ra \wt{X}$
the induced morphism. Look at $\wt{\pi}$ over the affine piece $\A^2_{W_2(k)}=\spec W_2(k)[u,v]$, where $u=x/z$ and $v=y/z$,
$\wt{Y}$ is defined by $t^2=u^2-v$, and $\wt{E}$ is defined by $v=p$. It is easy to see that $\wt{E}_{12}=\wt{\pi}^{-1}(\wt{E})$
is defined by $t^2=u^2-p$, which is irreducible. Hence by \cite[Lemma 2.2]{xie11}, $\wt{E}_{12}$ is not a lifting of $E_1$ or $E_2$
or $E_1+E_2$.
\[
\xymatrix{
E_1+E_2 \ar[d]_{\pi} \ar@{.>}[r]|\times & \wt{E}_{12} \ar[d]^{\wt{\pi}} \\
E \ar@{^{(}->}[r] & \wt{E}
}
\]

The further assumption made in Theorem \ref{4.1} guarantees that the choices of liftings $\wt{E}$ of $E$ are so adequate that
the above situation can be avoided. In our example, $s|_E$ is maximally $2$-divisible, if we can choose a lifting $\wt{E}$ of $E$
such that $\wt{s}|_{\wt{E}}$ is a divisible lifting of $s|_E$ (so $\wt{s}|_{\wt{E}}$ is also maximally $2$-divisible), then we have
a lifting $\Spec\wt{\BB}=\wt{F}_1\coprod\wt{F}_2$ of $\Spec\BB=F_1\coprod F_2$ such that $\wt{F}_i$ is a lifting of $F_i$ for $i=1,2$.
Let $\wt{E}_i=\tau_{\wt{E}}(\wt{F}_i)$. Then $\wt{E}_i$ is a lifting of $E_i$ for $i=1,2$, since $\tau_{\wt{E}}$ is a lifting of $\tau_E$.
\[
\xymatrix{
F_1\coprod F_2 \ar[d]_{\tau_E} \ar@{^{(}->}[r] & \wt{F}_1\coprod\wt{F}_2 \ar[d]^{\tau_{\wt{E}}} \\
E_1+E_2 \ar[d]_{\pi} \ar@{^{(}->}[r] & \wt{E}_1+\wt{E}_2 \ar[d]^{\wt{\pi}} \\
E \ar@{^{(}->}[r] & \wt{E}
}
\]
\end{ex}

\begin{proof}[Proof of Theorem \ref{4.1}]
Consider the following cartesian square, where $\wt{\pi}:\wt{Y}\ra \wt{X}$ is the natural projection induced
by the definition of $\wt{Y}$ in the proof of Theorem \ref{3.1}:
\[
\xymatrix{
Y \ar[d]_{\pi} \ar@{^{(}->}[r]^\iota & \wt{Y} \ar[d]^{\wt{\pi}} \\
X \ar@{^{(}->}[r]^\iota & \wt{X}.
}
\]

Let $E_Y$ be a prime divisor on $Y$, and $E=\pi_*(E_Y)$ the induced prime divisor on $X$.

If $E\subset\Supp(D)$, then let $\wt{E}\subset\Supp(\wt{D})$ be the corresponding lifting of $E$. We can take an irreducible component
$\wt{E}_Y$ of $\wt{\pi}^{-1}(\wt{E})$ such that $\wt{E}_Y\times_{\wt{E}}E=E_Y$, i.e.\ $\wt{E}_Y$ is a lifting of $E_Y$.

If $E\not\subset\Supp(D)$, then $\pi^{-1}(E)$ may be reducible. Assume that $\pi^{-1}(E)=\sum_{i=1}^\nu E_i$ with $E_1=E_Y$.
Since $E\not\subset\Supp(D)$, $0\neq s|_E\in H^0(E,\LL^N|_E)$ determines the effective divisor $D|_E$ on $E$.
Let $\tau_E:\Spec\BB=\sum_{j=1}^\mu F_j\ra \Spec\cA|_E=\pi^{-1}(E)=\sum_{i=1}^\nu E_i$ be the natural morphism defined as
in Lemma \ref{3.7}, where $F_j$ are distinct irreducible components. Since $\tau_E$ is finite and surjective, we may assume that
$\tau_E(F_1)=E_1$. Since $\Spec\BB=\sum_{j=1}^\mu F_j\ra E$ is the cyclic cover obtained by taking the $N$-th root out of $s|_E$,
by Lemma \ref{3.4}, the section $s|_E$ is maximally $\mu$-divisible. Thus there exists a section
$t_E\in H^0(E,\LL^{N/\mu}|_E)$ such that $s|_E=t_E^{\otimes\mu}$. By assumption, there is a lifting $\wt{E}\subset\wt{X}$ of
$E\subset X$ such that $\wt{s}|_{\wt{E}}$ is a divisible lifting of $s|_E$, i.e.\ there is a section $\wt{t_E}\in
H^0(\wt{E},\wt{\LL}^{N/\mu}|_{\wt{E}})$ lifting $t_E$ such that $\wt{s}|_{\wt{E}}=\wt{t_E}^{\otimes\mu}$.

Consider $\tau_{\wt{E}}:\Spec\wt{\BB}\ra \Spec\wt{\cA}|_{\wt{E}}$ defined as in Lemma \ref{3.9}, where
$\Spec\wt{\BB}\ra \wt{E}$ is the cyclic cover obtained by taking the $N$-th root out of $\wt{s}|_{\wt{E}}$,
and $\Spec\wt{\cA}|_{\wt{E}}=\wt{E}\times_{\wt{X}}\wt{Y}=\wt{\pi}^{-1}(\wt{E})$. Since $s|_E$ is maximally $\mu$-divisible and
$\wt{s}|_{\wt{E}}$ is a divisible lifting of $s|_E$, by Lemma \ref{3.6}, we may assume that $\Spec\wt{\BB}=\sum_{j=1}^\mu \wt{F}_j$,
where $\wt{F}_j$ are distinct irreducible components, hence $\wt{F}_j$ have distinct underlying topological spaces. Since
$\Spec\wt{\BB}\times_{\spec W_2(k)}\spec k=\Spec\BB=\sum_{j=1}^\mu F_j$ and $\wt{F}_j\times_{\spec W_2(k)}\spec k$ are distinct,
up to permutation of indices, we can assume that $\wt{F}_j\times_{\spec W_2(k)}\spec k=F_j$ for any $1\leq j\leq \mu$.

By Lemma \ref{3.9}, $\tau_{\wt{E}}$ is finite and surjective, hence there is an irreducible component of $\wt{\pi}^{-1}(\wt{E})$, say $\wt{E}_1$,
such that $\tau_{\wt{E}}|_{\wt{F}_1}:\wt{F}_1\ra\wt{E}_1$ is surjective. Since $\tau_{\wt{E}}$ is a lifting of $\tau_E$, we have that
$\wt{E}_1\times_{\spec W_2(k)}\spec k=E_1$. Finally, we will show that $\wt{E}_1$ is flat over $W_2(k)$,
whence $\wt{E}_1$ is a lifting of $E_1=E_Y$, thus $Y$ is strongly liftable over $W_2(k)$.
\[
\xymatrix{
F_1+\cdots+F_\mu \ar[d]_{\tau_E} \ar@{^{(}->}[r] & \wt{F}_1+\cdots+\wt{F}_\mu \ar[d]^{\tau_{\wt{E}}} \\
E_1+\cdots+E_\nu \ar[d]_{\pi} \ar@{^{(}->}[r] & \wt{E}_1+\cdots+\wt{E}_\nu \ar[d]^{\wt{\pi}} \\
E \ar@{^{(}->}[r] & \wt{E}
}
\]

Since $W_2(k)$ is an Artin local ring, to prove that $\wt{E}_1$ is flat over $W_2(k)$, by the local criteria of flatness
\cite[(20.C) Theorem 49]{ma80}, it suffices to show $\tor_1^{W_2(k)}(\OO_{\wt{E}_1},k)=0$. Let $Z=\pi^{-1}(E)$, $\wt{Z}=\wt{\pi}^{-1}(\wt{E})$,
$\II$ the ideal sheaf of $E_1$ in $Z$, and $\wt{\II}$ the ideal sheaf of $\wt{E}_1$ in $\wt{Z}$. Then the structure sheaf of $\wt{Z}$ is $\wt{\cA}|_{\wt{E}}$,
which is locally free over $\wt{E}$ and $\wt{E}$ is flat over $W_2(k)$, hence $\wt{Z}$ is flat over $W_2(k)$ and $\wt{Z}\times_{\spec W_2(k)}\spec k=Z$.
Locally, $E_1$ is defined by one of the factors of the equation $x^N=s|_E$, and $\wt{E}_1$ is defined by one of the factors of the equation
$\wt{x}^N=\wt{s}|_{\wt{E}}$. Since $\wt{s}|_{\wt{E}}$ is a divisible lifting of $s|_E$, we have that the reduction of the defining equations
of $\wt{E}_1$ modulo $p$ are just the defining equations of $E_1$, hence $\wt{\II}\times_{\spec W_2(k)}\spec k=\II$ holds.
Considering the following exact sequence:
\begin{eqnarray*}
0\ra \wt{\II}\ra \OO_{\wt{Z}}\ra \OO_{\wt{E}_1}\ra 0,
\end{eqnarray*}
and taking its long exact sequence for $-\otimes_{W_2(k)}k$, we obtain an exact sequence:
\begin{eqnarray*}
0\ra \tor^{W_2(k)}_1(\OO_{\wt{E}_1},k)\ra \wt{\II}\otimes_{W_2(k)}k\ra \OO_Z\ra \OO_{E_1}\ra 0,
\end{eqnarray*}
which implies $\tor_1^{W_2(k)}(\OO_{\wt{E}_1},k)=0$, since $\wt{\II}\otimes_{W_2(k)}k=\II$ and
$0\ra \II\ra \OO_Z\ra \OO_{E_1}\ra 0$ is exact.
\end{proof}

\begin{defn}\label{4.3}
A noetherian scheme $X$ is said to satisfy the $H^i$-vanishing condition, if $H^i(X,\LL)=0$ holds for any invertible sheaf
$\LL$ on $X$. For example, the projective space $\PP^n_k$ satisfies the $H^i$-vanishing condition for any $1\leq i\leq n-1$.
\end{defn}

\begin{cor}\label{4.4}
Let $X$ be a smooth projective variety satisfying the $H^i$-vanishing condition for $i=1,2$. Then $X$ is strongly liftable
over $W_2(k)$, and for any cyclic cover $\pi:Y\ra X$ constructed as in Theorem \ref{3.1}, $Y$ is also strongly liftable
over $W_2(k)$.
\end{cor}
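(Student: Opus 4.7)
The plan is to prove the two parts of the corollary in sequence, reducing each to previously established results.

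For the strong liftability of $X$, pick any lifting $\wt{X}$ of $X$ over $W_2(k)$ and apply Proposition \ref{2.3}. For any prime divisor $D$ on $X$, the exact sequence (\ref{es4}) combined with $H^2(X,\OO_X)=0$ produces a lifting $\wt{\LL}_D$ of $\LL_D=\OO_X(D)$ on $\wt{X}$, and the exact sequence (\ref{es6}) combined with $H^1(X,\LL_D)=0$ forces the restriction map $r:H^0(\wt{X},\wt{\LL}_D)\to H^0(X,\LL_D)$ to be surjective. Hence Proposition \ref{2.3} delivers strong liftability.

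For the strong liftability of $Y$, apply Theorem \ref{4.1}. Since $H^1(X,\LL^N)=0$ is part of the $H^1$-vanishing, Theorem \ref{3.1} provides liftings $\wt{X},\wt{\LL},\wt{s}$, and the task reduces to verifying the divisibility hypothesis of Theorem \ref{4.1}. Let $E$ be a prime divisor on $X$ not contained in $\Supp(D)$, and suppose $s|_E=t_E^{\otimes\mu}$ is maximally $\mu$-divisible. Twisting the ideal sheaf sequence $0\to\OO_X(-E)\to\OO_X\to\OO_E\to 0$ by $\LL^{N/\mu}$ and invoking $H^1(X,\LL^{N/\mu})=H^2(X,\LL^{N/\mu}(-E))=0$ yields $H^1(E,\LL^{N/\mu}|_E)=0$ and the surjection $H^0(X,\LL^{N/\mu})\twoheadrightarrow H^0(E,\LL^{N/\mu}|_E)$; extend $t_E$ to a global section $t\in H^0(X,\LL^{N/\mu})$. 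Since $s-t^{\otimes\mu}$ vanishes on $E$, we may write $s=t^{\otimes\mu}+f_E\cdot h$ globally with $h\in H^0(X,\LL^N(-E))$ and $f_E$ a defining section of $E$. Lift $t,h$ to sections $\wt{t},\wt{h}$ on $\wt{X}$ via the $H^1$-vanishings, and lift $f_E$ to $\wt{f}_E$ (defining a candidate lifting $\wt{E}$) using strong liftability of $X$.

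The main obstacle is the final adjustment step. The construction yields $\wt{s}-\wt{t}^{\otimes\mu}-\wt{f}_E\cdot\wt{h}=p\cdot v$ for some $v\in H^0(X,\LL^N)$, and the goal is to arrange $v|_E=0$; restricting the identity $\wt{s}=\wt{t}^{\otimes\mu}+\wt{f}_E\cdot\wt{h}+pv$ to $\wt{E}$ (where $\wt{f}_E$ vanishes and $pv|_{\wt{E}}=0$ once $v|_E=0$) then yields $\wt{s}|_{\wt{E}}=(\wt{t}|_{\wt{E}})^{\otimes\mu}$, exhibiting $\wt{s}|_{\wt{E}}$ as a divisible lifting of $s|_E$ as required. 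The simultaneous modifications $\wt{t}\mapsto\wt{t}+pt'$, $\wt{f}_E\mapsto\wt{f}_E+pg$, $\wt{h}\mapsto\wt{h}+ph''$ alter $v|_E$ by $-\mu t_E^{\mu-1}\cdot t'|_E-g|_E\cdot h|_E$, and the delicate point is to show the corresponding linear map from $H^0(E,\LL^{N/\mu}|_E)\oplus H^0(E,\NN_{E/X})$ to $H^0(E,\LL^N|_E)$ surjects. The key inputs are the surjections $H^0(X,\LL^{N/\mu})\twoheadrightarrow H^0(E,\LL^{N/\mu}|_E)$ and $H^0(X,\OO_X(E))\twoheadrightarrow H^0(E,\NN_{E/X})$ (both consequences of the $H^1$-vanishing), combined with the option to first re-adjust $t$ globally by elements of $f_E\cdot H^0(X,\LL^{N/\mu}(-E))$, which modifies $h|_E$. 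Once the divisibility hypothesis is verified, Theorem \ref{4.1} yields strong liftability of $Y$.
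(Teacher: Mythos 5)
Your first paragraph reproduces the paper's argument for the strong liftability of $X$ exactly (surjectivity of $r$ on $\Pic$ from $H^2(X,\OO_X)=0$ via (\ref{es4}), surjectivity of $r$ on sections from $H^1(X,\LL_D)=0$ via (\ref{es6}), then Proposition \ref{2.3}), and the reduction of the second part to the divisibility hypothesis of Theorem \ref{4.1} is also the right move. The problem is the step you yourself call delicate: the surjectivity of the map $(t',g)\mapsto \mu\,t_E^{\mu-1}t'|_E+g|_E\,h|_E$ is asserted, not proved, and it is false in general. Its image is $t_E^{\mu-1}\cdot H^0(E,\LL^{N/\mu}|_E)+h|_E\cdot H^0(E,\NN_{E/X})$, so every element of the image vanishes on the scheme-theoretic intersection $Z=\divisor_0(t_E^{\mu-1})\cap\divisor_0(h|_E)$; whenever $Z\neq\emptyset$ the image is a proper subspace, while $v|_E$ carries no constraint forcing it to vanish on $Z$ (replacing the fixed lifting $\wt{s}$ by $\wt{s}+ps'$ changes $v$ by the arbitrary section $s'$). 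And $Z$ is frequently nonempty: a local computation shows that if $D$ has a component of multiplicity $\geq 2$ meeting $E$, then $h|_E$ vanishes along $\divisor_0(t_E)$ there. The extreme case is $X=\PP^3$, $N=\mu=2$, $s=\ell^{\otimes 2}$: then $t=\ell$, $h=0$, and your image is $\ell|_E\cdot H^0(E,\OO_E(1))$, of codimension $3$ in $H^0(E,\OO_E(2))$. Finally, the extra freedom you invoke --- replacing $t$ by $t+f_E w$ with $w\in H^0(X,\LL^{N/\mu}(-E))$ --- changes $h|_E$ only by $-\mu\,w|_E\,t_E^{\mu-1}$, so the resulting change in $h|_E\cdot H^0(E,\NN_{E/X})$ is already contained in $t_E^{\mu-1}\cdot H^0(E,\LL^{N/\mu}|_E)$; it does not enlarge the image at all. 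So the argument does not close.

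For comparison, the paper does not try to kill the discrepancy additively. It takes an \emph{arbitrary} lifting $\wt{E}$, lifts $t_E$ to $\wt{t_E}$ on $\wt{E}$ using the same two surjections you establish, and then argues multiplicatively: $\wt{s}|_{\wt{E}}$ and $\wt{t_E}^{\otimes\mu}$ both reduce to $s|_E$, hence (using the injectivity of $r:\Pic(\wt{E})\to\Pic(E)$ coming from $H^1(E,\OO_E)=0$, sequence (\ref{es8})) differ by a unit $\wt{u}$ with $r(\wt{u})=1$; since $p\nmid\mu$, such a unit has a $\mu$-th root of the same form, which is absorbed into $\wt{t_E}$. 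In your additive language, the content of that step is precisely the claim that $v|_E$ lies in $t_E^{\mu-1}\cdot H^0(E,\LL^{N/\mu}|_E)$ up to the $g|_E h|_E$ correction --- i.e.\ the divisibility of the discrepancy by $t_E^{\mu-1}$. That divisibility is the real heart of the matter, and it is exactly what your write-up does not supply; making your computation explicit has the virtue of exposing the difficulty, but it does not resolve it.
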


\begin{proof}
From the exact sequences (\ref{es4}) and (\ref{es6}) and Proposition \ref{2.3}, it follows that $X$ is strongly liftable.
By Theorem \ref{4.1}, we have only to show that for any prime divisor $E$ on $X$, there exists a lifting $\wt{E}\subset\wt{X}$
of $E\subset X$ such that $\wt{s}|_{\wt{E}}$ is a divisible lifting of $s|_E$.

Assume that $s|_E\in H^0(E,\LL^N|_E)$ is $\mu$-divisible. Thus there is a section $t_E\in H^0(E,\LL^{N/\mu}|_E)$
such that $s|_E=t_E^{\otimes\mu}$. Take an arbitrary lifting $\wt{E}\subset\wt{X}$ of $E\subset X$ and consider the following
commutative diagram:
\[
\xymatrix{
H^0(\wt{X},\wt{\LL}^{N/\mu}) \ar[d]_{q_{\wt{E}}} \ar@{->>}[r]^r & H^0(X,\LL^{N/\mu}) \ar@{->>}[d]^{q_E} \\
H^0(\wt{E},\wt{\LL}^{N/\mu}|_{\wt{E}}) \ar[r]^r & H^0(E,\LL^{N/\mu}|_E),
}
\]
where the surjectivity of the upper horizontal map $r$ and the right vertical map $q_E$ follows from the $H^1$-vanishing condition
for $X$ by observing the exact sequence (\ref{es6}) and the following exact sequence:
\begin{eqnarray}
0\ra \OO_X(-E)\ra \OO_X\ra \OO_E\ra 0. \label{es7}
\end{eqnarray}
Thus for $t_E\in H^0(E,\LL^{N/\mu}|_E)$, there exists a section $\wt{t}\in H^0(\wt{X},\wt{\LL}^{N/\mu})$ such that $q_E\circ r(\wt{t})=t_E$.
Let $\wt{t_E}=q_{\wt{E}}(\wt{t})$. Then $\wt{t_E}\in H^0(\wt{E},\wt{\LL}^{N/\mu}|_{\wt{E}})$ is a lifting of $t_E$.

The exact sequence (\ref{es7}) gives rise to an exact sequence of cohomology groups:
\begin{eqnarray*}
0=H^1(X,\OO_X)\ra H^1(E,\OO_E)\ra H^2(X,\OO_X(-E))=0,
\end{eqnarray*}
hence we have $H^1(E,\OO_E)=0$. Taking cohomology groups of the following exact sequence,
which is the exact sequence (\ref{es3}) for $\wt{E}$:
\begin{eqnarray*}
0\ra \OO_E\stackrel{q}{\ra} \OO_{\wt{E}}^*\stackrel{r}{\ra} \OO_E^*\ra 0,
\end{eqnarray*}
we have an exact sequence of cohomology groups:
\begin{eqnarray}
0=H^1(E,\OO_E)\ra H^1(\wt{E},\OO_{\wt{E}}^*)\stackrel{r}{\ra} H^1(E,\OO_E^*), \label{es8}
\end{eqnarray}
which implies that $\wt{\LL}_1\cong \wt{\LL}_2$ if and only if $\LL_1\cong \LL_2$, where $\wt{\LL}_i$ are invertible sheaves
on $\wt{E}$ and $\LL_i=\wt{\LL}_i|_E$ for $i=1,2$.

Since $r(\wt{s}|_{\wt{E}})=s|_E=t_E^{\otimes\mu}=r(\wt{t_E}^{\otimes\mu})$, hence there exists a unit $\wt{u}\in\OO_{\wt{E}}^*$
such that $r(\wt{u})=1$ and $\wt{s}|_{\wt{E}}=\wt{u}\wt{t_E}^{\otimes\mu}$. Since $p\nmid N$, we have $p\nmid \mu$, hence
there exists a unit $\wt{v}\in\OO_{\wt{E}}^*$ such that $\wt{v}^\mu=\wt{u}$ and $r(\wt{v})=1$. Redefine $\wt{t_E}$ by
$\wt{v}\wt{t_E}$, then $\wt{t_E}$ is a lifting of $t_E$ and $\wt{s}|_{\wt{E}}=\wt{t_E}^{\otimes\mu}$ is $\mu$-divisible.
\end{proof}

\begin{cor}\label{4.5}
Let $X=\PP^n_k$ with $n\geq 3$, and $\LL$ an invertible sheaf on $X$.
Let $N$ be a positive integer prime to $p$, and $D$ an effective divisor
on $X$ with $\LL^N=\OO_X(D)$ and $\Sing(D_{\rm red})=\emptyset$.
Let $\pi:Y\ra X$ be the cyclic cover obtained by taking the $N$-th root out of $D$.
Then $Y$ is a smooth projective scheme which is strongly liftable over $W_2(k)$.
\end{cor}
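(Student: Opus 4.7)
The plan is to reduce this corollary to a direct application of Corollary \ref{4.4}. Essentially, I only need to verify that $X = \PP^n_k$ with $n \geq 3$ satisfies the hypothesis of Corollary \ref{4.4}, namely the $H^i$-vanishing condition for $i = 1, 2$ in the sense of Definition \ref{4.3}.

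The verification proceeds as follows. Recall that for projective space $\PP^n_k$, the cohomology of line bundles is completely computed: for any integer $d$,
\[
H^i(\PP^n_k, \OO(d)) = 0 \quad \text{for all } 0 < i < n.
\]
Since $\Pic(\PP^n_k) = \Z$, every invertible sheaf on $\PP^n_k$ is of the form $\OO(d)$ for some $d \in \Z$. Hence $\PP^n_k$ satisfies the $H^i$-vanishing condition for every $1 \leq i \leq n-1$, as already remarked in Definition \ref{4.3}. Under the assumption $n \geq 3$, this covers both $i = 1$ and $i = 2$.

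With the vanishing conditions verified, Corollary \ref{4.4} applies directly to $X = \PP^n_k$ and the given data $(\LL, N, D)$, producing the strong liftability of $Y$ over $W_2(k)$. The smoothness and projectivity of $Y$ are already part of the conclusion of Theorem \ref{3.1} (invoked inside Corollary \ref{4.4}), using the hypothesis $\Sing(D_{\rm red}) = \emptyset$, and the hypothesis $H^1(X, \LL^N) = 0$ needed to produce the lifting $\wt{s}$ is a special case of the $H^1$-vanishing condition just verified. Since every step is essentially a citation of previously established results, there is no genuine obstacle to overcome; the only content is the elementary cohomological computation on $\PP^n_k$ together with checking $n \geq 3 \Rightarrow \{1,2\} \subset \{1, \dots, n-1\}$.
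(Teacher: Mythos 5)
Your proposal is correct and follows exactly the paper's own argument: verify that $\PP^n_k$ with $n\geq 3$ satisfies the $H^i$-vanishing condition for $i=1,2$ (via the standard cohomology of line bundles on projective space) and then invoke Corollary \ref{4.4}. The extra details you supply about $\Pic(\PP^n_k)=\Z$ and the role of $\Sing(D_{\rm red})=\emptyset$ are just elaborations of the same one-line reduction.
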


\begin{proof}
Since the projective space $\PP^n_k\,(n\geq 3)$ satisfies the $H^i$-vanishing condition for $i=1,2$,
the conclusion follows from Corollary \ref{4.4}.
\end{proof}

By means of Corollary \ref{4.5}, we can construct many strongly liftable varieties of general type.

\begin{ex}\label{4.6}
Let $X=\PP^n_k$, $\LL=\OO_X(1)$ and $N$ a positive integer such that $n\geq 3$, $(N,p)=1$ and $N>n+2$.
Let $H$ be a general element in the linear system of $\OO_X(N)$. Then $H$ is a smooth irreducible
hypersurface of degree $N$ in $X$ with $\LL^N=\OO_X(H)$. Let $\pi:Y\ra X$ be the cyclic cover
obtained by taking the $N$-th root out of $H$. Then by Corollary \ref{4.5}, $Y$ is a strongly liftable
smooth projective variety. By Hurwitz's formula, we have $K_Y=\pi^*(K_X+\frac{N-1}{N}H)$.
Since the degree of $K_X+\frac{N-1}{N}H$ is $N-(n+2)>0$, $K_Y$ is an ample divisor on $Y$,
hence $Y$ is of general type.
\end{ex}

Obviously, the $H^i$-vanishing condition for $i=1,2$ is too strong to give more applications.
Although there are no further evidences besides Corollary \ref{4.5}, we would like to put forward
the following conjecture, i.e.\ cyclic covers over toric varieties should be strongly liftable
over $W_2(k)$, whereas the liftability has already been proved in \cite[Corollary 4.4]{xie11}.

\begin{conj}\label{4.7}
Let $X$ be a smooth projective toric variety, and $\LL$ an invertible sheaf
on $X$. Let $N$ be a positive integer prime to $p$, and $D$ an effective
divisor on $X$ with $\LL^N=\OO_X(D)$ and $\Sing(D_{\rm red})=\emptyset$.
Let $\pi:Y\ra X$ be the cyclic cover obtained by taking the $N$-th root
out of $D$. Then $Y$ is a smooth projective scheme which is strongly liftable
over $W_2(k)$.
\end{conj}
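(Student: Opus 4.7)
The plan is to reduce Conjecture \ref{4.7} to Theorem \ref{4.1} and argue in the spirit of Corollary \ref{4.4}, replacing the $H^i$-vanishing hypothesis by combinatorial input from the toric structure of $X$. A smooth projective toric variety $X$ is strongly liftable over $W_2(k)$: the canonical lift $\wt X$ defined by the same fan (now over $W_2(k)$) is smooth and flat, every torus-invariant prime divisor lifts tautologically, and any other prime divisor $E\subset X$, realised via the Cox homogeneous coordinate ring $S=k[x_\rho]$ as the zero locus of a $\Pic(X)$-homogeneous prime polynomial $f_E\in S$, lifts to $\wt X$ by choosing any coefficient-wise lift $\wt f_E$ in $\wt S=W_2(k)[x_\rho]$. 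Since the liftability of $Y$ is already known by \cite[Corollary 4.4]{xie11}, the data $\wt X,\wt\LL,\wt s$ required by Theorem \ref{4.1} are in place, and only the divisibility condition remains to be verified.

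Fix a prime divisor $E\not\subset\Supp(D)$ and write $s|_E=t_E^{\otimes\mu}$ for the maximal divisibility of $s|_E$. Suppose, provisionally, that there is a lift $\wt E\subset\wt X$ of $E$ together with a lift $\wt{t_E}\in H^0(\wt E,\wt\LL^{N/\mu}|_{\wt E})$ of $t_E$. Then the quotient $\wt s|_{\wt E}/\wt{t_E}^{\otimes\mu}$ is a unit $\wt u\in\OO_{\wt E}^*$ with $\wt u\equiv 1\pmod p$, and since $p\nmid\mu$ the truncated binomial series provides a unique $\mu$-th root $\wt v\in\OO_{\wt E}^*$ of $\wt u$ with $\wt v\equiv 1\pmod p$; replacing $\wt{t_E}$ by $\wt v\,\wt{t_E}$ then gives $\wt s|_{\wt E}=\wt{t_E}^{\otimes\mu}$, which is a divisible lift as demanded by Theorem \ref{4.1}.

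The principal obstacle is producing the initial section $\wt{t_E}$. In Corollary \ref{4.4} this relied on the surjectivity of $H^0(\wt X,\wt\LL^{N/\mu})\to H^0(E,\LL^{N/\mu}|_E)$, which in turn rested on the vanishings $H^1(X,\LL^{N/\mu})=0$ and $H^1(X,\LL^{N/\mu}(-E))=0$; on a general smooth projective toric variety these cohomology groups are typically nonzero, which is precisely why Corollary \ref{4.5} does not already cover the toric case. The natural workaround is combinatorial: represent $t_E$ by a Cox-homogeneous polynomial of the appropriate $\Pic(X)$-degree modulo $f_E$, lift it coefficient-wise to $\wt S/(\wt f_E)$, and reinterpret the result as a section of $\wt\LL^{N/\mu}|_{\wt E}$ after an appropriate choice of $\wt f_E$. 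Arranging the choice of $\wt E$ so that $\wt s|_{\wt E}$ agrees with $\wt{t_E}^{\otimes\mu}$ up to a unit congruent to $1$ modulo $p$ --- uniformly across all prime divisors $E$, including those not invariant under the torus action --- is the central technical difficulty and, in my view, the most plausible reason the assertion is posed here only as a conjecture.
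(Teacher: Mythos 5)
This statement is posed in the paper as a \emph{conjecture}: the author explicitly says there is no proof and no evidence beyond Corollary \ref{4.5}, so there is no argument of the paper to compare yours against. Your proposal correctly identifies the only available strategy --- feed the toric $X$ into Theorem \ref{4.1} and verify, for every prime divisor $E\not\subset\Supp(D)$, the existence of a lifting $\wt{E}$ on which $\wt{s}|_{\wt{E}}$ is a divisible lifting of $s|_E$ --- and it correctly diagnoses why the proof of Corollary \ref{4.4} does not transfer: the vanishings $H^1(X,\LL^{N/\mu})=0$ and $H^1(X,\LL^{N/\mu}(-E))=0$ (and $H^1(E,\OO_E)=0$, which the paper also uses via the sequence (\ref{es8})) genuinely fail on a general smooth projective toric variety. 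The unit-adjustment step (extracting a $\mu$-th root of $\wt{u}\equiv 1 \pmod p$ using $p\nmid\mu$) is fine and is the same as in Corollary \ref{4.4}.

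But the proposal is not a proof, and the gap is exactly where you place it. Everything hinges on producing \emph{some} lifting $\wt{t_E}\in H^0(\wt{E},\wt{\LL}^{N/\mu}|_{\wt{E}})$ of $t_E$, i.e.\ on the surjectivity of $r:H^0(\wt{E},\wt{\LL}^{N/\mu}|_{\wt{E}})\ra H^0(E,\LL^{N/\mu}|_E)$, which via (\ref{es6}) would follow from $H^1(E,\LL^{N/\mu}|_E)=0$; none of this is available. The Cox-ring workaround you sketch does not repair this even at the level of the special fibre: a section $t_E$ of $\LL^{N/\mu}|_E$ need not extend to a global section of $\LL^{N/\mu}$ on $X$ when $H^0(X,\LL^{N/\mu})\ra H^0(E,\LL^{N/\mu}|_E)$ fails to be surjective, so $t_E$ need not be represented by any Cox-homogeneous polynomial modulo $f_E$, and for a non-torus-invariant $E$ there is no canonical lifting $\wt{E}$ and no combinatorial description of $H^0(\wt{E},\cdot)$ to lift coefficient-wise. (A secondary point: writing $\wt{s}|_{\wt{E}}=\wt{u}\,\wt{t_E}^{\otimes\mu}$ with $\wt{u}$ a \emph{unit} already requires knowing that the two liftings of $\divisor_0(s|_E)$ agree, which in Corollary \ref{4.4} is extracted from $H^1(E,\OO_E)=0$; you would need a substitute for that as well.) So your write-up is an accurate map of the difficulty, but the statement remains open, exactly as the paper presents it.
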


\small

\textsc{School of Mathematical Sciences, Fudan University,
Shanghai 200433, China}

\textit{E-mail address}: \texttt{qhxie@fudan.edu.cn}

\end{document}